\documentclass{amsart}

\usepackage{etex}
\usepackage{amsmath, amssymb}
\usepackage{array}
\usepackage[frame,cmtip,arrow,matrix,line,graph,curve]{xy}
\usepackage{graphpap, color, paralist, pstricks}
\usepackage[mathscr]{eucal}
\usepackage[pdftex]{graphicx}
\usepackage[pdftex,colorlinks,backref=page,citecolor=blue]{hyperref}
\usepackage{pifont}
\usepackage{cancel}
\usepackage{mathtools}
\usepackage{tikz}

\setlength{\oddsidemargin}{0in}
\setlength{\evensidemargin}{0in}
\setlength{\marginparwidth}{0in}
\setlength{\marginparsep}{0in}
\setlength{\marginparpush}{0in}
\setlength{\topmargin}{0in}
\setlength{\headsep}{8pt}
\setlength{\footskip}{.3in}
\setlength{\textheight}{9in}
\setlength{\textwidth}{6.5in}
\setlength{\parskip}{4pt}
\linespread{1.2}

\newtheorem{thm}{Theorem}[section]
\newtheorem{prop}[thm]{Proposition}

\newtheorem{lemma}[thm]{Lemma}
\newtheorem{conjecture}[thm]{Conjecture}
\theoremstyle{definition}

\newtheorem{claim}[thm]{Claim}
\newcommand{\cO}{\mathcal{O} }

\newcommand{\cC}{\mathcal{C} }

\newcommand{\cE}{\mathcal{E} }
\newcommand{\cF}{\mathcal{F} }
\newcommand{\cG}{\mathcal{G} }
\newcommand{\cH}{\mathcal{H} }

\newcommand{\cW}{\mathcal{W} }

\newcommand{\pr}{\mathbb{P}}

\newcommand{\beq}[1]{\begin{equation}\label{#1}}
\newcommand{\enq}[0]{\end{equation}}

\newcommand{\bn}[0]{\bigskip\noindent}
\newcommand{\mn}[0]{\medskip\noindent}
\newcommand{\nin}[0]{\noindent}

\newcommand{\sub}[0]{\subseteq}
\newcommand{\sm}[0]{\setminus}
\renewcommand{\dots}[0]{,\ldots,}

\newcommand{\cee}[0]{{\mathcal C}}

\newcommand{\eee}[0]{{\mathcal E}}
\newcommand{\f}[0]{{\mathcal F}}

\newcommand{\g}[0]{{\mathcal G}}

\newcommand{\Oh}[0]{{\mathcal O}}

\newcommand{\R}[0]{{\mathcal R}}

\newcommand{\T}[0]{{\mathcal T}}
\newcommand{\U}[0]{{\mathcal U}}
\newcommand{\V}[0]{{\mathcal V}}
\newcommand{\W}[0]{{\mathcal W}}

\newcommand{\ra}[0]{\rightarrow}

\newcommand{\bg}[0]{{\bf g}}

\newcommand{\0}[0]{\emptyset}

\newcommand{\C}[2]{{{#1}\choose{{#2}}}}
\newcommand{\Cc}[0]{\tbinom}
\newcommand{\ga}[0]{\alpha }

\newcommand{\go}[0]{\omega}
\newcommand{\gO}[0]{\Omega}

\newcommand{\gz}[0]{\zeta}

\newcommand{\vp}[0]{\varphi}

\newcommand{\comments}[1]{}
%\usepackage[usenames]{color}

%\newcommand{\pr}[1][]{\mathbb{P}}

%%%%%%%%%%%%%%%%%%%from ``Entropy" file

\begin{document}

\title{The number of 4-colorings of the Hamming cube}

\author{Jeff Kahn \and Jinyoung Park}
\thanks{The authors are supported by the National Science Foundation under Grant Award DMS1501962}
\thanks{JK is supported by a Simons Fellowship and BSF grant 2014290.}
\thanks{JP is partially supported by CNS-1526333.}
\email{jkahn@math.rutgers.edu, jp1324@math.rutgers.edu}
\address{Department of Mathematics, Rutgers University \\
Hill Center for the Mathematical Sciences \\
110 Frelinghuysen Rd.\\
Piscataway, NJ 08854-8019, USA}

\begin{abstract}
Let $Q_d$ be the $d$-dimensional hypercube and $N=2^d$.  
We prove that
the number of (proper) 4-colorings of $Q_d$ is asymptotically
\[6e2^N,\]
as was conjectured by Engbers and Galvin in 2012. 
The proof uses a combination of information theory (entropy) and 
isoperimetric ideas originating in work of Sapozhenko in the 1980's.
\end{abstract}

\maketitle

%%%%%%%%%%%%%%%%%%%%%%%%%%%%%%%%%%%

\section{Introduction}\label{Intro}

\subsection{Theorem and background}

Write $Q_d$ for the $d$-dimensional hypercube.
(A few basic definitions are given at the beginning of Section~\ref{sec:mp}.)
We use $C_q(G)$ for the number of $q$-colorings of a graph $G$
(where, here and throughout, \emph{coloring} means \emph{proper vertex} coloring),
and $N$ for $2^d$.

The purpose of this paper is to prove the following statement, which was conjectured by Engbers and Galvin
\cite{EG}.
\begin{thm}\label{mainthm}
$C_4(Q_d) \sim 6e2^N $
\end{thm}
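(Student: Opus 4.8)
The strategy is to think of a proper 4-coloring of $Q_d$ via the parity structure of the cube. Write $\cE$ and $\cO$ for the even and odd sides of the bipartition (each of size $N/2$). The key observation, following the Engbers--Galvin heuristic and the general approach to counting colorings of bipartite graphs, is that in a ``typical'' 4-coloring one of the two sides is (nearly) monochromatic-on-two-colors: that is, there is a partition $\{1,2,3,4\}=\{a,b\}\cup\{c,d\}$ of the color set into two pairs, and the coloring restricted to, say, $\cE$ uses only colors from $\{a,b\}$ while $\cO$ uses only colors from $\{c,d\}$. There are $3$ ways to pair up the four colors, $2$ ways to decide which side gets which pair, and then the coloring factors into an arbitrary $2$-coloring of each side, giving $\sim 3\cdot 2\cdot 2^{N/2}\cdot 2^{N/2}=6\cdot 2^N$ as the leading term; the extra factor of $e$ comes from a second-order correction (colorings that are ``almost'' of this form but allow a bounded number of defects). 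So the heart of the matter is to show that the number of 4-colorings \emph{not} of this near-product type is negligible, i.e. $o(2^N)$, and that the correction is exactly $e$.

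\textbf{Key steps.} First I would set up the entropy framework: pick a uniformly random proper 4-coloring $\vphi$ of $Q_d$ and bound $H(\vphi)=\log_2 C_4(Q_d)$ from above. Using the chain rule and conditioning cleverly on the colors seen on one side, one reduces the estimate of $H(\vphi)$ to understanding, for each vertex, the ``effective'' number of available colors given its neighborhood — this is where the bipartite structure and the entropy-per-vertex bookkeeping enter. Second, and this is the isoperimetric core inherited from Sapozhenko's container-type arguments, I would classify colorings by the structure of the set of ``bad'' or ``atypical'' vertices: vertices on $\cE$ (say) whose color lies outside the dominant pair, or regions where the two-coloring pattern breaks. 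The Sapozhenko-style lemmas let one cover such bad sets efficiently (a small number of ``approximations'' / containers), so that the total contribution of colorings with a large bad set is exponentially small compared to $2^N$. Third, for colorings with a small bad set one performs a finer analysis: the bad vertices are essentially isolated (scattered, pairwise far apart), each contributes a bounded multiplicative factor, and summing over the possible locations and local configurations of these defects produces a convergent series whose sum is $e$ (this is the familiar $\sum_k 1/k!$ phenomenon — each isolated defect can be ``switched on'' roughly independently in one of $\sim N/2$ roughly-equivalent spots, contributing a Poisson-type $e^{\pm o(1)}$). Finally, combine the upper bound from entropy with the matching lower bound obtained by explicitly exhibiting this family of near-product colorings plus defects, to get $C_4(Q_d)\sim 6e2^N$.

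\textbf{Main obstacle.} The principal difficulty is the second step: proving that colorings whose ``disagreement set'' (the set of vertices witnessing a departure from the ideal two-colors-per-side pattern) is even moderately large are genuinely rare. Unlike the independent-set / hard-core case handled directly by Sapozhenko and by Galvin, here the relevant ``bad'' object is not simply a subset of the vertex set but a more delicate combinatorial structure attached to a $4$-coloring, and it is not a priori closed under the operations (like projecting to a side, or looking at the boundary) that make the isoperimetric machinery run. So the real work is to define the right notion of an approximating structure, prove an isoperimetric inequality showing such structures have large edge-boundary in $Q_d$ unless they are tiny, and feed this into a container/counting argument with good enough constants that the bound beats $2^N$ — all while keeping the error terms tight enough (only $e^{o(N)}$ slack, and in the small-defect regime only $o(1)$ slack in the exponent) that the final asymptotic constant $6e$ survives. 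Marrying the entropy bound (which is good at ``averaged'' statements) to the isoperimetry (which is good at ``structural'' statements) at the interface — the regime of intermediate-size defect sets — is where I expect the argument to be most technically demanding.
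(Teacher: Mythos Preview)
Your high-level plan---lower bound via ideal colorings, upper bound via entropy combined with Sapozhenko-type approximation of bad sets, with the factor $e$ arising from the $\sum_k 1/k!$ over isolated defects---is indeed the paper's approach. But the sequencing you propose contains a real gap.

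You say you would ``first'' take $\vphi$ uniform over all proper 4-colorings and bound $H(\vphi)=\log_2 C_4(Q_d)$ from above, and only ``second'' bring in the Sapozhenko/container machinery. This fails: as the paper explicitly records (see \eqref{logf*}), the Shearer-type entropy argument applied to the full distribution yields only $\log|\f^*|\le N+O(N/d)$, which is worthless since the target is to get \emph{below} $N$. The entropy argument has no teeth until one has already restricted to colorings sharing a fixed ``template'' (a partial specification of the bad set $X_f$, obtained via the Sapozhenko--Galvin approximation lemmas). Only then can one exploit the crucial saving of Proposition~\ref{prop:tu}: once the template tells you that $N_u$ contains both good and bad vertices, the per-vertex contribution $T(u)$ drops from $2+O(1/d)$ to $1+O(1/d)$. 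So the order is templates first, entropy second---exactly opposite to your outline, and precisely what the paper singles out as its main novelty relative to earlier work.

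A second omission: you do not invoke the Engbers--Galvin result (Theorem~\ref{start}) that all but a $2^{-\Omega(d)}$ fraction of colorings already have a main phase with disagreement set of size below $\alpha^d$ for some fixed $\alpha<2$. The paper takes this as its point of departure; without it you cannot apply the Sapozhenko lemmas (which require $g\le\gamma^d$ for $\gamma<2$), and reproving it from scratch is itself a substantial task.
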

\nin
(where $a\sim b$ means $a/b\ra 1$).
We will say why this is natural in a moment.

\mn
\emph{A little background}

\mn

For $q=3$ the result corresponding to Theorem~\ref{mainthm} was proved
by Galvin about fifteen years ago \cite{G}:
\begin{thm}\label{thmG}
$C_3(Q_d) \sim 6e2^{N/2}.$
\end{thm}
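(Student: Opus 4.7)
The plan is to identify the colorings that dominate and show that all others contribute negligibly. First, I would classify a 3-coloring $f$ by its approximate structure: one of the two bipartition classes $\mathcal{E}, \mathcal{O}$ of $Q_d$ carries a single ``dominant'' color on almost all of its vertices, while the other class is freely colored with the remaining two colors (up to a small ``defect'' set). A priori there are $2 \cdot 3 = 6$ such (side, dominant color) labels. Fix one, say color $1$ is dominant on $\mathcal{E}$: then the defect set $D \subseteq \mathcal{E}$ of non-$1$ vertices must itself be independent, and each $v \in D$ forces its $d$ neighbors in $\mathcal{O}$ to avoid $v$'s color; apart from that, $\mathcal{O}$ is freely $\{2,3\}$-colored.

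In the no-defect case, we get exactly $2^{|\mathcal{O}|} = 2^{N/2}$ colorings. Summing over small defect configurations produces a Poisson-type series: for a $k$-element defect set $D$ with pairwise disjoint neighborhoods in $\mathcal{O}$, the contribution is $2^{k}\cdot 2^{N/2-dk}$ (the $2^{k}$ accounting for the $\{2,3\}$-choice on each $v \in D$), and the number of such $D$ is asymptotically $\binom{N/2}{k} \sim (N/2)^{k}/k!$. Since $N = 2^{d}$, this evaluates to $(1/k!) \cdot 2^{N/2}$, and summing over $k \geq 0$ yields the factor $e$. Multiplying by the $6$ label choices gives the claimed leading term $6e \cdot 2^{N/2}$. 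A routine inclusion–exclusion handles the overcounting of colorings compatible with more than one label (for instance, the $2$-colorings that put one color on $\mathcal{E}$ and another on $\mathcal{O}$), whose total count is $O(1)$ and swept into the error term.

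The hard part is proving that ``atypical'' 3-colorings --- those failing to fit any of the $6$ labels with a small defect set --- contribute $o(2^{N/2})$ in total, so that the heuristic above gives the true asymptotic. I would reduce this to controlling independent sets in $Q_d$ with large vertex-boundary, via Sapozhenko-style container and $2$-linked-component arguments: if no color class of $f$ is nearly confined to one side of the bipartition, then some color class must have substantial intersection with both $\mathcal{E}$ and $\mathcal{O}$, which forces its ``boundary'' in $Q_d$ to be large. Using the vertex-isoperimetric inequality on $Q_d$ (Harper's theorem), together with fine enumeration of large-boundary independent sets à la Sapozhenko, one bounds the number of atypical colorings by $2^{N/2 - \omega(1)}$, which is the required error estimate. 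Combining the precise asymptotic count of typical colorings with this bound on atypical ones yields $C_3(Q_d) = (1+o(1)) \cdot 6e \cdot 2^{N/2}$, proving Theorem~\ref{thmG}.

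The chief conceptual obstacle is the isoperimetric step: the naive estimate ``there are few independent sets of any given intermediate size'' is much too weak to beat $2^{N/2}$. What is needed is a Sapozhenko-type structural decomposition that organizes independent sets according to their boundary profile and encodes each class by a small ``container'' (approximate description) that admits only a modest number of refinements. Executing this in the three-color setting, and matching the Poisson-based lower and upper bounds tightly enough to extract the leading constant $6e$ (rather than just the exponential order $2^{N/2}$), is the technical heart of the argument.
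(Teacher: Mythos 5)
This statement is not proved in the paper: it is Galvin's theorem, quoted from \cite{G} as background, and the present paper only ever \emph{applies} it (and the Sapozhenko machinery behind it). So there is no in-paper proof to compare against; what I can say is how your sketch relates to Galvin's actual argument and to the analogous $q=4$ argument carried out here. Your identification of the main term is correct: the six ``(side, dominant color)'' phases, the count $\binom{N/2}{k}2^k2^{N/2-dk}\sim 2^{N/2}/k!$ for $k$ isolated flaws, the resulting factor $e$, and the negligibility of overlaps between phases all match the lower-bound discussion the paper gives for $q=4$ (and the Engbers--Galvin framework). Your overall architecture --- reduce to colorings with a main phase, count the ideal ones exactly, and kill the non-ideal ones by a Sapozhenko-type container/2-linked-component analysis --- is indeed the architecture of \cite{G}.

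The gap is that the entire technical content of the theorem sits inside the step you defer. Two specific points. First, the reduction to ``almost every coloring has a main phase'' (the analogue of Theorem~\ref{start}) is not a consequence of Harper/K\"orner--Wei isoperimetry as you suggest; in Galvin's proof it comes from an entropy argument (the result of \cite{BHM}, echoed in \cite{EG}), and isoperimetry enters only later, in controlling the defect structures. Second, and more seriously, ``fine enumeration of large-boundary independent sets \`a la Sapozhenko'' is precisely the statement of Lemmas~\ref{ag}--\ref{agbh}, whose proofs occupy \cite{Sap87,GS,G}; invoking them by name does not prove them, and without them your upper bound on atypical colorings is exactly the ``naive estimate'' you correctly identify as too weak. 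There is also a bookkeeping issue you would need to handle even granting those lemmas: a defect vertex of $\mathcal{E}$ does not merely restrict its neighborhood, it can propagate defects into $\mathcal{O}$ (a vertex of $\mathcal{E}$ colored with the ``wrong'' one of the two $\mathcal{O}$-colors forces all $d$ of its neighbors to be flaws), so the two-sided defect interaction must be analyzed, not just one independent defect set per side. In short: right roadmap, correct leading constant, but the proposal as written proves nothing beyond what the heuristic lower bound already gives.
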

\nin
Galvin's work was strongly influenced by the pioneering ideas of
Sapozhenko, used in his proof \cite{Sap87} of the
following result of Korshunov and Sapozhenko \cite{KS},
in which $i(G)$ is
number of independent sets in $G$.
\begin{thm}\label{thmS}
$i(Q_d) \sim 2\sqrt{e}2^{N/2}.$
\end{thm}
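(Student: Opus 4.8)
$Q_d$ is bipartite with sides $\mathcal E,\mathcal O$ (even/odd weight vertices), $|\mathcal E|=|\mathcal O|=n:=N/2=2^{d-1}$. An independent set $I$ is exactly a pair $(A,B)$ with $A=I\cap\mathcal E\subseteq\mathcal E$ and $B=I\cap\mathcal O\subseteq\mathcal O\setminus N(A)$, so one has the exact identity $i(Q_d)=\sum_{A\subseteq\mathcal E}2^{\,n-|N(A)|}$ (and the same with $\mathcal O$ for $\mathcal E$, via the automorphism translating by an odd-weight vertex). The plan is to fix a slowly growing threshold $t=t(d)$ (say $t=d^2$), split the independent sets into $\mathcal I_1=\{I:|N(A)|<t\}$, $\mathcal I_2=\{I:|N(B)|<t\}$ and $\mathcal I^{*}=\{I:|N(A)|\ge t,\;|N(B)|\ge t\}$, and show $|\mathcal I_1|=|\mathcal I_2|=(1+o(1))\sqrt e\,2^{n}$, $|\mathcal I_1\cap\mathcal I_2|=o(2^{n})$, $|\mathcal I^{*}|=o(2^{n})$; then $i(Q_d)=|\mathcal I_1|+|\mathcal I_2|-|\mathcal I_1\cap\mathcal I_2|+|\mathcal I^{*}|=(2\sqrt e+o(1))2^{N/2}$.

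\textbf{The main term.} In $|\mathcal I_1|=\sum_{|N(A)|<t}2^{\,n-|N(A)|}$ the weight concentrates on \emph{scattered} $A$ — vertices pairwise at distance $\ge 6$ in $Q_d$ — for which the $N(a)$ ($a\in A$) are disjoint, so $|N(A)|=|A|\,d$. For each fixed $k$ a $(1-o(1))$-fraction of $k$-subsets of $\mathcal E$ is scattered and $\binom nk 2^{-kd}=(1+o(1))/(k!\,2^{k})$, so (dominating by $\binom nk2^{-kd}\le(e/2k)^{k}$ for $k\ge2$ to pass to the limit) $\sum_{A\text{ scattered}}2^{\,n-|N(A)|}\to 2^{n}\sum_{k\ge0}\tfrac1{k!\,2^{k}}=\sqrt e\,2^{n}$. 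For non-scattered $A$ with $|N(A)|<t$ I would fix a close pair $u,w\in A$ (distance $\le 4$), write $A=\{u,w\}\cup A'$, use $|N(A)|\ge|N(A')|+2d-O(1)$, and sum over the $n\,d^{O(1)}$ pairs and over $A'$, getting a contribution $O(n\,d^{O(1)}2^{-2d})\,2^{n}=o(2^{n})$. So $|\mathcal I_1|=(1+o(1))\sqrt e\,2^{n}$, and $|\mathcal I_2|$ likewise by symmetry. On the overlap, $d$-regularity gives $|A|\le|N(A)|<t$ and $|B|<t$, hence $|I|<2t$ and $|\mathcal I_1\cap\mathcal I_2|\le 2^{O(t\log N)}=2^{O(td)}=o(2^{n})$.

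\textbf{The hard part.} Proving $|\mathcal I^{*}|=o(2^{n})$ is where Sapozhenko's isoperimetric machinery is needed. For $I\in\mathcal I^{*}$ put $G=N(A)$ and $[G]=\{v\in\mathcal E:N(v)\subseteq G\}\supseteq A$; then $I\subseteq[G]\cup(\mathcal O\setminus G)$, while $|G|\ge t$ and (as $|\mathcal O\setminus G|\ge|B|\ge|N(B)|/d\ge t/d$) $|G|\le n-t/d$, so
\[
|\mathcal I^{*}|\le\sum_{\substack{G=N(A),\,A\subseteq\mathcal E\\ t\le|G|\le n-t/d}}2^{\,|[G]|+n-|G|}.
\]
I would bound this via two inputs. \emph{Isoperimetry:} $[G]$ has $|N([G])|\le|G|$, and Harper's vertex-isoperimetric inequality in $Q_d$ gives $|N(W)|\ge(1+\Omega(d^{-1/2}))|W|$ for $W\subseteq\mathcal E$ of the sizes that occur here, whence $|[G]|\le(1-\Omega(d^{-1/2}))|G|$ and $|[G]|+n-|G|\le n-\Omega(|G|/\sqrt d)$. \emph{Containers:} Sapozhenko's container lemma for $Q_d$ bounds the number of sets of the form $N(A)$ with $|N(A)|=g$ by $2^{\,O(g\,(\log d)^{O(1)}/d)}$, each being recovered from a small greedily chosen ``fingerprint''. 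Since $(\log d)^{O(1)}/d\ll d^{-1/2}$, the product of the two bounds makes each summand $\le 2^{\,n-\Omega(|G|/\sqrt d)}$, so summing over $g\in[t,\,n-t/d]$ gives $O(n)\,2^{\,n-\Omega(t/\sqrt d)}=o(2^{n})$ once $t\gg\sqrt d$. (The two extremes of the $g$-range — $[G]$ tiny, or $[G]$ almost all of $\mathcal E$ — fall outside the clean form of the container bound and need a more hands-on argument: there $G$ is, up to a negligible correction, $N(S)$ or $\mathcal O\setminus N(S)$ for a small scattered $S$, and such $G$ contribute at most $\sum_{k\ge t/d}2^{n}/k!=o(2^{n})$ by the crude bound $2^{|[G]|}$ and the rapid decay of the exponential series.)

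\textbf{The main obstacle.} The hard part is exactly the obstacle: making the count of ``closures'' $G$ of each size sharp enough to beat the isoperimetric loss \emph{uniformly in} $|G|$, the delicate regime being $|G|$ close to $n$ (where $[G]$ may be nearly all of $\mathcal E$ and the isoperimetric gain is thinnest). This quantitative control of closures is precisely the graph-container/isoperimetry technique of Sapozhenko that the theorem rests on.
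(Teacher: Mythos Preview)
The paper does not prove Theorem~\ref{thmS}; it is quoted as background (a result of Korshunov--Sapozhenko \cite{KS}, with Sapozhenko's proof \cite{Sap87} and the exposition \cite{GS} cited), so there is no in-paper argument to compare against. Your outline is recognisably the Sapozhenko strategy that those references carry out, and you correctly locate the difficulty in the container/isoperimetry step.

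That said, two steps in your sketch do not go through as written. First, in the non-scattered case the inequality $|N(A)|\ge |N(A')|+2d-O(1)$ for $A'=A\setminus\{u,w\}$ is false: if, say, $A'$ contains all distance-$2$ neighbours of $u$, then $N(u)\subseteq N(A')$ and removing $u$ costs nothing; with $w$ at distance $4$ from $u$ one checks $|N(A)|-|N(A')|=d-4$, not $2d-O(1)$. The standard cure is to argue via the $2$-components of $A$ (a nonsingleton component has $|N(\cdot)|\ge 2(d-1)$ and, by Lemma~\ref{1overd}, few vertices), not by peeling off a single close pair.

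Second, and more seriously, your ``container'' statement is not what Sapozhenko proves and is in fact false. The number of sets of the form $N(A)$ with $|N(A)|=g$ is \emph{not} $2^{O(g\,(\log d)^{O(1)}/d)}$: already for $g=d^2$, taking $A$ a union of $\sim d$ far-apart singletons gives $\sim\binom{n}{d}=2^{\Theta(d^2)}$ distinct $N(A)$'s. What Lemma~\ref{approx} actually gives is a family $\mathcal W$ of $2^{O(g\log^2 d/d)}$ \emph{approximating pairs} $(S,F)$; one then bounds the contribution \emph{per container} by $2^{|S|+n-|F|}$ and sums over $\mathcal W$, rather than bounding per $G$ and summing over $G$'s. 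Your product ``(\# of $G$'s)$\times 2^{|[G]|+n-|G|}$'' with the correct count (cf.\ Lemma~\ref{ag}) would be exponentially large in $g$. Once the argument is rerouted through containers in this way, your outline becomes the proof in \cite{Sap87,GS}.
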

\nin
See also the exposition
of Sapozhenko's proof in \cite{GS}, which is the version we will be
referring to in what follows.
The results of both \cite{Sap87} and \cite{G} will again be important
here, though, unlike in \cite{G}, the parts of our argument where these appear will
simply apply the earlier results without further developing the machinery of \cite{Sap87}.

\mn
\emph{Meaning and task}

\mn

In each of the preceding theorems the asymptotic value is an obvious
\emph{lower} bound.  We just say (quickly) how this goes for Theorem~\ref{mainthm}.
Let $\{1,2,3,4\}$ be our set of colors, $\cC$ the set of (six) ordered equipartitions of
this set, and $\f$ the set of (proper) 4-colorings of $Q_d$.
Say $f \in \cF$ \textit{agrees} with $(C,D)\in \cee$ at $v$ if
\[ f_v \in C \Leftrightarrow v \in \cE.\]
For given $(C,D)$ and a fixed $k$, the number of colorings that \emph{dis}agree with $(C,D)$ at
precisely $k$ vertices is asymptotically $\C{N}{k}2^{N-dk}\sim 2^N/k!$
(the $k$ exceptional vertices---\emph{flaws}---will typically have disjoint neighborhoods,
whose colors are determined by those of the flaws), and summing over choices of $(C,D)$ and $k$ gives the value
in Theorem~\ref{mainthm}.

(See also the more general discussion in \cite[Sec.\ 6.1]{EG}; in particular the case $q=4$ of
their Conjecture~6.2 (recalled here in Conjecture~\ref{EGConj}) is our Theorem~\ref{mainthm}.
For $q\geq 5$, colorings will typically have \emph{many} flaws,
and the conjectured asymptotics are for $\log C_q(Q_d)$ rather than $C_q(Q_d)$ itself;
but see Conjecture~\ref{34Conj} below.)

%%%%%%%%%%%%%%%%%%%%%%%%%%
\iffalse
(See also the much more general discussion in \cite[Sec.\ 6.1]{EG}, especially
their Conjecture~6.2, which is the general $q$ version of our Theorem~\ref{mainthm}.
For $q\geq 5$ colorings will typically have \emph{many} ``flaws,"
and the conjectured asymptotic is for $\log C_q(Q_d)$ rather than $C_q(Q_d)$ itself.)
\fi
%%%%%%%%%%%%%%%%%%%%%%%%%%

\mn

In fact for almost every $f$ there is some $(C,D)\in \cee$ with which $f$ agrees on all but a
tiny fraction of the vertices;
this special case of Theorem 1.1 of \cite{EG}
is our point of departure:

\begin{thm}\label{start}
There is a fixed $\alpha <2$ such that for all but $|\cF|2^{-\gO(d)}$ $f$'s
in $\cF$ there is some $(C,D) \in \cC$ such that
\[ |\{ v \in V : f \mbox{ disagrees with } (C,D) \mbox{ at } v \}| < \alpha^d.\]
\end{thm}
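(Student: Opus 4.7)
I would classify each 4-coloring $f \in \cF$ by the ``sidedness'' and ``defect'' structure of its color classes. Write $A_c = f^{-1}(c)$ and $A_c^X = A_c \cap X$ for $X \in \{\cE, \cO\}$; each $A_c$ is an independent set in $Q_d$. Call color $c$ \emph{$\cE$-sided} if $|A_c^{\cE}| \geq |A_c^{\cO}|$, and let $t_c(f) := \min(|A_c^{\cE}|, |A_c^{\cO}|)$ be the number of \emph{flaws} of color $c$. When exactly two colors are $\cE$-sided, taking $C$ to be this pair and $D$ its complement produces $(C,D) \in \cC$, with which $f$ disagrees at exactly $\sum_c t_c(f)$ vertices. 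Thus it suffices to show that outside a set of size $|\cF| \cdot 2^{-\gO(d)}$ we have (I) a $2$-$2$ sidedness split and (II) $\sum_c t_c(f) < \alpha^d$ for some fixed $\alpha < 2$.

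For (I), suppose instead that (say) three colors are $\cE$-sided and only color $4$ is $\cO$-sided, so $|A_4^{\cO}|$ is at least a constant fraction of $N$. Then every $v \in \cE$ with a neighbor in $A_4^{\cO}$ is forbidden from color $4$; by standard edge-isoperimetry in $Q_d$, all but an $e^{-\gO(d)}$ fraction of $\cE$ has such a neighbor, so those vertices are colored from $\{1,2,3\}$. A crude count bounds such colorings by $3^{N/2 + o(N)} \cdot 4^{o(N)} = 2^{N(\log_2 3)/2 + o(N)}$, which is $|\cF| \cdot 2^{-\gO(N)}$ and thus vastly within budget.

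For (II), the main technical step, I would apply the ``2-linked container'' method of Sapozhenko \cite{Sap87}, as adapted to colorings in Galvin \cite{G}. For each $\cE$-sided $c$, the wrong-side set $A_c^{\cO}$ is an independent set whose closed neighborhood in $\cE$ avoids $A_c^{\cE}$. Approximating each $A_c^{\cO}$ (and, for $\cO$-sided $c$, each $A_c^{\cE}$) by a small family of 2-linked covers of its closed neighborhood, using vertex-isoperimetry in $Q_d$ to bound the number of such covers, and then counting the colorings consistent with each cover, one shows that for every defect vector $(t_c)$ with $\sum_c t_c \geq \alpha^d$ the corresponding colorings contribute at most $|\cF| \cdot 2^{-\gO(d)}$ in total. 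Summing over $(t_c)$ then gives (II). The saving comes from the fact that a wrong-side set of size $s$ forces a boundary of size comparable to $s$ (with a $d$-dependent gain from isoperimetry), and $\alpha < 2$ can be chosen so that this dominates the entropy of the defect locations.

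The main obstacle is the isoperimetric bookkeeping in the joint setting of four interacting color classes: one must coordinate the container approximations across the $A_c$'s while tracking their mutual disjointness and partitioning of $V(Q_d)$, and still extract the full $2^{\gO(d)}$ saving. The framework of \cite{Sap87}, \cite{G}, and the exposition in \cite{GS} is essentially designed for such arguments, and I would expect only modest adaptations to be needed for the $q=4$ case.
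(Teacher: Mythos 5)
First, note that the paper does not prove Theorem~\ref{start} at all: it is quoted as a special case of Theorem~1.1 of \cite{EG} and serves as the black-box ``point of departure'' for everything that follows. So you are attempting to reprove an imported input, and as a proof your proposal has genuine gaps.

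Two concrete problems. (1) In step (I), the inference ``three colors $\cE$-sided $\Rightarrow |A_4^{\cO}|=\Omega(N)$'' does not follow: the sidedness constraints only give $\sum_c|A_c^{\cO}|=N/2$ and $|A_4^{\cO}|\ge |A_4^{\cE}|$, which is compatible with $A_4$ being tiny or empty (e.g.\ a proper 3-coloring, of which there are $\sim 6e2^{N/2}$ --- rare, but for a different reason than you give). Worse, even granting $|A_4^{\cO}|=\Omega(N)$, the claim that all but an $e^{-\Omega(d)}$ fraction of $\cE$ then has a neighbor in $A_4^{\cO}$ is false: if $A_4^{\cO}$ is the odd part of a Hamming ball of constant measure $\epsilon$, the set of even vertices with no neighbor in it has measure about $1-\epsilon-O(d^{-1/2})$. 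Edge-isoperimetry gives a large edge boundary, not near-total vertex coverage. The repairable version of (I) is: if the split is not $2$--$2$ \emph{and} $\sum_c t_c<\alpha^d$, then one side of the cube is essentially monochromatic and the other essentially $3$-colored, and a direct count of $3^{N/2+o(N)}$ applies; but that still leaves everything riding on (II). (2) Step (II) is precisely the hard content of the theorem and is asserted rather than argued. The container lemmas in the Sapozhenko--Galvin line (Lemmas~\ref{ag}--\ref{agbh} here) apply only to $2$-linked sets whose neighborhood has size at most $\gamma^d$ for some $\gamma<2$; they cannot by themselves exclude defect sets of size between $\gamma^d$ and $\Theta(N)$, where the ``boundary comparable to $s$ with a $d$-dependent gain'' heuristic fails outright (a set of size $N/4$ on one side can have neighborhood barely larger than itself). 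That large-defect regime is exactly where the entropy-based stability analysis of \cite{EG} (building on \cite{BHM} and \cite{G}) does its work, and it is the reason the present paper imports the statement instead of reproving it; your expectation that ``only modest adaptations'' of the container machinery are needed understates what is involved.
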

\nin
For $f$ and $(C,D)$ as in Theorem \ref{start}, call $(C,D)$ the \textit{main phase} of $f$. (So not every $f$ has a main phase, but the number that do not is negligible.) We then write $X_f$ for the set of vertices that disagree with $(C,D)$ at $f$ and call such vertices \textit{bad} (for $f$).

Say a coloring (with a main phase)
is \textit{ideal} if any two of its bad vertices are at distance at least 3.
The preceding lower bound discussion extends to say that the number of ideal colorings is
less than $6\sum_k\C{N}{k}2^{N-dk}< 6e2^N$.
So the asymptotic number of ideal colorings is $6e2^N$ and for Theorem~\ref{mainthm} we should show that the
number of  \textit{non-ideal} colorings is $o(2^N)$, which
in view of Theorem \ref{start} will follow if we show that
the number of \emph{non}-ideal colorings with a given main phase $(C,D)$ is $o(2^N)$.
(In fact it is $2^{N-\gO(d)}$---see following \eqref{calc}---which with
Theorem \ref{start} gives a similar value for what's lost in the ``$\sim$" of Theorem~\ref{mainthm}.)

We may specialize a little further:
Let $\f^*$ be the set of non-ideal $f$'s having main phase $(\{1,2\},\{3,4\})$ and satisfying
\beq{f*def}
|N(X_f \cap \cE)| \ge |N(X_f \cap \cO)|
\enq
(where $N$ is neighborhood).
Then Theorem \ref{mainthm} will follow from
\beq{|f*|}
|\cF^*|=o(2^N),
\enq
and the rest of the paper is concerned with proving this.

\mn

The actual proof is carried out in Section~\ref{Proof}.  Section~\ref{sec:mp} fills in usage notes,
then states our main lemma and shows that it implies \eqref{|f*|},
and Section~\ref{sec:tools} recalls relevant machinery,
consisting mainly of the results of Sapozhenko and Galvin mentioned above and (Shannon) entropy.
Finally, Section ~\ref{More} returns to the conjecture of Engbers and Galvin and suggests that in a couple cases something 
stronger might hold.

The tools named in the preceding paragraph 
are not unexpected, as both have been important in earlier work on questions of the present type,
but the way they are combined here seems interesting.
Specifically, what's perhaps most interesting is the use of entropy
\emph{following} application of \cite{Sap87,G}
(see Section~\ref{Colors}).
This is in contrast to, e.g., the use in \cite{G} of an
entropy-based result from \cite{BHM} as a sort of
preprocessing step (echoed in the role of the entropy-based
Theorem~\ref{thmG} here).
Something similar in spirit---though not in implementation---to what
we do appears
in a recent breakthrough of Peled and Spinka \cite{PS}
(on colorings of $\mathbb{Z}^d$ and related statistical physics models),
which partly inspired our approach.

\section{Setting up}\label{sec:mp}

\subsection{Definitions and such}\label{Defs}

As usual, $[n]$ is shorthand for $\{1\dots n\}$.
Recall that the $d$-dimensional hypercube $Q_d$ has
vertex set $\{0,1\}^d$, with two vertices adjacent iff they differ in exactly one coordinate.
Thus $Q_d$ is $d$-regular and bipartite with (unique) bipartition $\cE \cup \cO$, where $\cE$ and
$\cO$ are the sets of even and odd vertices (the \emph{parity} of $x$ being the parity of
the number of $1$'s in $x$).

For a graph $G$ and disjoint $X,Y\sub V(G)$,
$\nabla(X,Y)$ is the set of edges joining $X $ and $Y$.
We also use $N(X)$ for the set of neighbors of $X$ (with $N(\{x\})=N_x$), $N^2(X)$ for $N(N(X))$
and $B(X)$ for $\{y:N(y)\sub X\}$.
(We will only use these when $G$ is bipartite with $X$ contained in one side of
the bipartition, so e.g.\ will not need to worry about whether $N(X)$
can include vertices of $X$.)

For a graph $G$ and positive integer $k$, say
$u,v \in V=V(G)$ are \textit{$k$-linked} if there is a path from $u$ to $v$ of length at most $k$,
and $A \subset V$ is \textit{k-linked} if for any $u, v \in A$, there are
vertices $u=u_0, u_1, \ldots, u_l=v$ in $A$ such that $u_{i-1}, u_i$ are $k$-linked for each $i\in [l]$.
Then for $A\sub V$ the \textit{$k$-components} of $A$ are its maximal $k$-linked subsets.
(So we use ``component" for a set of vertices rather than a subgraph.)
In what follows we will only be interested in $k=2$, and use $i_A$ for the number
of $2$-components of $A$.  Notice that
\[   
\mbox{distinct 2-components of $A$ have disjoint neighborhoods.}
\]

In what follows $f$ will always be a (proper) coloring of $Q_d$.
We use $f_u$ for the value of $f$ at $u$ and $f_U$ for the restriction
of $f$ to $U$.

We almost always use \emph{lower case letters for the cardinalities of the sets
denoted by the corresponding upper case letters} (thus $a=|A|$, $\hat a =\hat A$ and so on),
usually without comment.

We use $\log$ for $\log_2$.
Following a common abuse, we pretend all large numbers are integers, to avoid
cluttering the paper with irrelevant floor and ceiling symbols.

\subsection{Main point}\label{MP}

For $f \in \cF^*$, we denote by $A_f$ and $\hat{A}_f$ the unions of (resp.) the nonsingleton
and singleton 2-components of $X_f\cap \eee$, and set $G_f=N(A_f)$ and $\hat G_f=N(\hat{A_f})$.
Set
\[ \cF^*(g, \hat g)=\{f \in \cF^*: |G_{f}|=g, |\hat G_{f}|=\hat g\}.\]
The next lemma is almost all of the story.

\begin{lemma}\label{mp}
\[
2^{-N}|\cF^*(g,\hat g)|=
\begin{cases}
2^{-\gO(d)} & \mbox{if $g=0$ and $\hat g \leq d^2/ \log d$,}\\
\exp[-\gO(\tfrac{g}{\log d} + \tfrac{\hat g}{d}\log\tfrac{\hat g}{d})]&\mbox{otherwise.}
\end{cases}
\]
\end{lemma}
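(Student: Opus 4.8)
The plan is to prove Lemma~\ref{mp} by fixing a main phase $(\{1,2\},\{3,4\})$ and carefully counting, for each pair $(g,\hat g)$, the colorings $f\in\cF^*(g,\hat g)$. A coloring in $\cF^*$ is essentially determined by the data $(X_f, f_{X_f\cup N(X_f)})$ together with an almost-free choice of $f$ on the remaining vertices; so the estimate will come from bounding (i) the number of choices of the ``defect structure'' $(A_f,\hat A_f)$ on the even side (and the analogous objects on the odd side, which are controlled by \eqref{f*def}), times (ii) the number of colorings of $N(X_f)$ compatible with a main phase, times (iii) $2$ raised to roughly the number of vertices whose color is then forced or free. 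The point of separating $A_f$ (nonsingleton $2$-components) from $\hat A_f$ (singletons) is that these two parts behave differently: isolated bad vertices each cost about a constant factor (hence the $\tfrac{\hat g}{d}\log\tfrac{\hat g}{d}$ term, which is what makes $\sum$ over $\hat g$ converge away from the ideal regime), while a nonsingleton $2$-component of even bad vertices, having $|N(A_f)|=g$ comparatively large relative to $|A_f|$ by hypercube isoperimetry, is genuinely expensive and contributes the $\tfrac{g}{\log d}$ term.

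The key steps, in order, would be: First, recall from Section~\ref{sec:tools} the Sapozhenko--Galvin container/approximation machinery: the number of ``closed'' even sets $A\sub\cE$ with $|N(A)|=g$ and $|B(N(A))|=b$ that can arise is at most roughly $2^{g-b}\cdot d^{O(g/d)}$-type bounds, i.e.\ $\exp[O(\tfrac{g}{d}\log d)]$ possibilities, and similarly control $\hat A_f$ by choosing its $2$-components one at a time ($N/k$-ish choices for $k$ of them, giving a $1/k!$ as in the lower-bound discussion). Second, given $N(X_f)$, count colorings: on $N(A_f)\sm X_f$ (odd vertices adjacent to a bad even vertex) the colors must lie in $\{3,4\}$-complement patterns but are heavily constrained — each such vertex has a bad neighbor forcing it out of its phase class — and the entropy/counting of these compatible colorings is where the gain over the trivial $2^N$ comes from; here I expect to invoke the entropy argument flagged in Section~\ref{Colors} to show the number of colorings of $G_f$ consistent with the phase is at most $2^{|G_f|}$ times a shrinking factor, essentially $2^{g}\cdot 2^{-\gO(g/\log d)}$. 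Third, assemble: the number of ways to extend to all of $Q_d$ is at most $2^{N-|N(X_f)|}$ times the coloring count on $N(X_f)$, and multiplying by the number of defect structures from step one yields $2^N\cdot\exp[-\gO(\tfrac{g}{\log d}+\tfrac{\hat g}{d}\log\tfrac{\hat g}{d})]$; the special case $g=0,\ \hat g\le d^2/\log d$ is where the main-phase bound $\alpha^d$ from Theorem~\ref{start} and the non-ideal hypothesis must be used directly to extract a plain $2^{-\gO(d)}$.

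The main obstacle, I expect, is step two — showing that the colorings of the ``boundary'' $N(X_f)$ compatible with a main phase are genuinely fewer than $2^{|N(X_f)|}$ by the claimed multiplicative factor $\exp[-\gO(g/\log d)]$. The subtlety is that a vertex in $N(A_f)$ need not individually be constrained very much; the savings is a global phenomenon coming from the interplay between the bad even vertices and the structure of $A_f$, and capturing it cleanly is exactly where entropy (conditioning on a well-chosen random vertex and its neighborhood, à la Peled--Spinka and the discussion after Theorem~\ref{thmG}) should do the work. A secondary technical nuisance is bookkeeping: making sure the odd-side contribution is subsumed by \eqref{f*def} (so that $|N(X_f\cap\cO)|\le |N(X_f\cap\cE)|=g+\hat g$) and that singleton/nonsingleton components are handled without double counting, so that the final sum $\sum_{g,\hat g}|\cF^*(g,\hat g)|=o(2^N)$ really does follow from Lemma~\ref{mp}. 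I would isolate the entropy estimate as a standalone sublemma and treat everything else as (careful but routine) Sapozhenko-style counting.
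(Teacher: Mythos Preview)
Your outline has the right two-stage shape (a Sapozhenko-type ``template'' stage followed by an entropy stage), and you have correctly located the $\hat g$ contribution and the special case.  But there is a genuine gap: you treat the odd-side bad set $X_f\cap\cO$ as ``bookkeeping\ldots subsumed by \eqref{f*def}'', and this is where the real work lies.  The inequality \eqref{f*def} only tells you that $|N(X_f\cap\cO)|\le g+\hat g$; it does \emph{not} let you ignore $X_f\cap\cO$ in the counting.  Concretely, in the entropy step with $\cU=G\cup\hat G$, for $u\in\cR:=\cO\setminus\cU$ you need $H(f_u\mid f(N_u))\le 1$.  Since $N_u$ consists of good even vertices, $f(N_u)\subseteq\{1,2\}$, and when $|f(N_u)|=1$ you only get $H(f_u\mid f(N_u))\le\log 3$.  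Without knowing which $u\in\cR$ are bad you cannot rule this out, and the accumulated $(\log 3-1)$ excess over $\cR$ swamps the $\gO(g/\log d)$ you are trying to save.  The paper therefore builds an odd-side template as well: it decomposes $X_f\cap\cR$ into pieces $P,\bar P,\hat P$ (according to proximity to $G\cup\hat G$ and singleton/nonsingleton), specifies the small pieces exactly, and---crucially---for the large pieces $P^l$ invokes the Sapozhenko \emph{approximation} lemma (Lemma~\ref{approx}) to produce a cheap pair $(S,F)$ with $S\supseteq P^l$.  This lets the entropy argument pay $\log 3$ only on $S$, and the loss $s(\log 3-1)$ is then recovered via $|\nabla(F,\cR)|/d$.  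The paper flags this as ``perhaps the most interesting part of this story''; it is not a nuisance but a main idea.

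A smaller point: you attribute the factor $2^{-\gO(g/\log d)}$ to the entropy on colorings of $G_f$.  In the paper's accounting it arises instead from the \emph{balance}: the template cost for $A^l$ (via Lemma~\ref{agbh}) is $g^l-b-\gO(g^l/\log d)$, while the entropy stage returns $-g^l+b$ (Proposition~\ref{prop:tu} gives $T(u)\le 1+O(1/d)$ for $u\in G\setminus B$, and $T(u)\le 2+O(1/d)$ on $B$); the net is $-\gO(g^l/\log d)$.  For small components $A^s$ the gain is the larger $-(1/2-o(1))g^s$, coming from isoperimetry (Lemma~\ref{1overd}) rather than Lemma~\ref{agbh}.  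Your sublemma isolating ``the entropy estimate'' should therefore be phrased as the bound $T(u)\le 1+O(1/d)$ for $u$ with both good and bad neighbours, not as a direct $2^{-\gO(g/\log d)}$ on colorings.
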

\nin

\mn

We close this section with the derivation of \eqref{|f*|} from Lemma~\ref{mp}.
The lemma itself is proved in Section~\ref{Proof}, following the
review of preliminaries in Section~\ref{sec:tools}.

\begin{proof}[Proof of \eqref{|f*|}]
We show
\beq{calc}
2^{-N}|\f^*| = 2^{-\gO(d/\log d)},
\enq
which a little more care with the bounds in Lemma~\ref{mp} (see the remark
following ``$A^l$ terms" in Section~\ref{Combining})
would improve to $2^{-\gO(d)}$.
With
$\sum^*$ running over $(g,\hat g)$ satisfying $g\neq 0$ or $\hat g> d^2/\log d$,
the lemma gives
\[
\mbox{$2^{-N}|\f^*| = 2^{-N}\sum^* |\f^*(g,\hat g)| + (d^2/\log d)2^{-\gO(d)}$;}
\]
so we are just interested in the sum, which we may bound by
\beq{2sums}
\frac{d^2}{\log d}\sum_{g\geq d}2^{-\gz g/\log d}
+\left(1+\sum_{g\geq d}2^{-\gz g/\log d}\right)\left(\sum_{\hat g\geq d^2/\log d}2^{-\gz (\hat g/d)\log(\hat g/d)}\right),
\enq
where $\gz>0$ is the implied constant in the second line of Lemma~\ref{mp}.
(Of course if $g$ is not zero then it is at least $d$.)

With $x=2^{-\gz /\log d}$, the first sum in \eqref{2sums} is
\[
x^d\sum_{i\geq 0}x^i = x^d/(1-x) =x^d\cdot O(\log d) = 2^{-\gO(d/\log d)}.
\]
Similarly, with $y= 2^{-\gz \log(d/\log d)/d}$, the last sum in \eqref{2sums} is less than
\[
\sum_{\hat g\geq d^2/\log d}y^{\hat g} = y^{d^2/\log d}/(1-y)
= 2^{-\gO(d)}.
\]
So we have \eqref{calc}.\end{proof}

\section{Tools}\label{sec:tools}

\subsection{Basic basics}\label{Basics}

Recall
that a \emph{composition} of $m$ is a sequence $(a_1\dots a_s)$ of positive integers
with $\sum a_i=m$ (the $a_i$'s are the \emph{parts} of the composition), and that:

\begin{prop}\label{compprop}
The number of compositions of $m$ is $2^{m-1}$ and the number with at most $b\leq m/2$
parts is $\sum_{i<b}\C{m-1}{i}<\exp_2[b\log (em/b)]$.
\end{prop}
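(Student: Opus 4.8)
The plan is the standard ``stars and bars'' bijection followed by a routine binomial tail estimate. First I would identify compositions of $m$ with subsets of an $(m-1)$-element set: write $m$ as a row of $m$ ones and note that a composition is determined by which of the $m-1$ gaps between consecutive ones receive a divider, the parts being the maximal runs of ones so created. A composition with exactly $s$ parts corresponds to a choice of $s-1$ of these gaps, so there are $\binom{m-1}{s-1}$ of them; summing over $s$ from $1$ to $m$ gives $\sum_{s=1}^{m}\binom{m-1}{s-1}=2^{m-1}$ for the total, and summing over $s\le b$ gives $\sum_{i<b}\binom{m-1}{i}$ for the number with at most $b$ parts.

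For the bound on this partial sum I would use the usual exponential-weighting trick: for $0<x\le 1$ and any $k$, since $x^{i-k}\ge 1$ whenever $i\le k$,
\[
\sum_{i=0}^{k}\binom{n}{i}\ \le\ x^{-k}\sum_{i=0}^{n}\binom{n}{i}x^{i}\ =\ x^{-k}(1+x)^{n}.
\]
I would apply this with $n=m-1$, $k=b-1$, and $x=b/m$; this is legitimate since $1\le b\le m/2$ gives $0<x\le 1$ (and $k<n$). Using $(1+b/m)^{m-1}\le e^{b(m-1)/m}<e^{b}$, one gets
\[
\sum_{i<b}\binom{m-1}{i}\ \le\ (m/b)^{b-1}e^{b}\ =\ e\,(em/b)^{b-1}.
\]
Finally, $b\le m/2$ forces $em/b\ge 2e>e$, so the right-hand side is strictly less than $(em/b)^{b}=\exp_2[b\log(em/b)]$, which is the claimed bound.

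There is no real obstacle here; the one point that needs a little care is the bookkeeping of the factors of $e$, i.e.\ arranging the choice of $x$ so that the constant inside the exponent comes out exactly as $em/b$ rather than something slightly larger. Taking $x=b/m$ (rather than the genuinely optimal $x=(b-1)/(m-1)$) keeps the computation clean while still leaving enough slack, via the crude inequality $em/b\ge 2e$, to absorb the leftover factor of $e$ and retain a strict inequality.
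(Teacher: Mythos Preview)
Your proof is correct. The paper does not actually supply a proof of this proposition; it is stated as a standard fact and used as a black box, so there is no ``paper's own proof'' to compare against. Your stars-and-bars bijection and exponential-weighting tail bound are exactly the classical arguments one would expect here, and the bookkeeping with $x=b/m$ and the final absorption of the leftover factor of $e$ via $em/b\ge 2e$ is handled cleanly.
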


We will use the next lemma in bounding the numbers of certain types of $2$-linked sets in $Q_d$.
It follows from the fact (see e.g. \cite[p.\ 396, Ex.11]{Knuth}) that the
infinite $\Delta$-branching rooted tree contains precisely
\[\frac{{{\Delta n} \choose n}}{(\Delta-1)n+1} \le (e\Delta)^{n-1}\]
rooted subtrees with $n$ vertices.

\begin{lemma}\label{treelemma}
If $G$ is a graph with maximum degree $\Delta$, then the number of $n$-vertex
subsets of $V(G)$ which contain a fixed vertex and induce a connected subgraph is at most $(e\Delta)^n$.
\end{lemma}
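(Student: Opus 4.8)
The plan is to set this up as a pure reduction: injectively encode each connected $n$-vertex subset of $V(G)$ through the fixed vertex $v$ as a rooted subtree --- one containing the root --- of the infinite $\Delta$-branching rooted tree $\mathbb T$ figuring in the count quoted just above, and then simply read off the bound from that count. No estimates of our own will be needed.

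First I would fix, once and for all, a linear order on $V(G)$ and, for each $u\in V(G)$, an ordering $u^{(1)}\dots u^{(\deg u)}$ of the neighbors of $u$; since $\deg u\le\Delta$ this assigns to each edge at $u$ a distinct ``slot'' in $[\Delta]$. Given a connected $S\sub V(G)$ with $v\in S$, let $T_S$ be the breadth-first-search spanning tree of $G[S]$ rooted at $v$, with ties broken according to the fixed order on $V(G)$; this is a canonically determined rooted tree on vertex set $S$, so $S\mapsto T_S$ is an injection (indeed $S$ is just the vertex set of $T_S$).

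Next I would map $T_S$ into $\mathbb T$: send the root $v$ of $T_S$ to the root of $\mathbb T$ and, inductively, if a vertex $u$ of $T_S$ has already been sent to a node $\tilde u$ of $\mathbb T$ and $w$ is a child of $u$ in $T_S$ with $w=u^{(j)}$, send $w$ to the $j$-th child of $\tilde u$. Since the slots occurring at any $u$ are distinct elements of $[\Delta]$, this produces an honest rooted, root-containing, $n$-vertex subtree $\tilde T_S$ of $\mathbb T$. The one point that hinges on a careful word is that $S\mapsto\tilde T_S$ is still injective: from $\tilde T_S$ one reconstructs $T_S$ as a $G$-vertex-labeled tree top-down, since the root carries label $v$ and, once a node is known to carry label $u$, its children in $\mathbb T$, occupying slots $j_1<j_2<\cdots$, must carry labels $u^{(j_1)},u^{(j_2)},\ldots$; in particular $S$ is recovered.

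Granting this, the number of connected $n$-vertex subsets of $V(G)$ containing $v$ is at most the number of $n$-vertex rooted subtrees of $\mathbb T$ through the root, which by the quoted fact is $\C{\Delta n}{n}/((\Delta-1)n+1)\le(e\Delta)^{n-1}\le(e\Delta)^n$. The main obstacle, such as it is, is entirely bookkeeping: making the spanning tree genuinely canonical so that the first map is a well-defined injection, and checking the top-down reconstruction for the second; there is no analytic content beyond the cited enumeration.
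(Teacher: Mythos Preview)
Your proposal is correct and takes essentially the same approach as the paper: the paper simply cites the Knuth enumeration of $n$-vertex rooted subtrees of the infinite $\Delta$-branching tree and asserts that the lemma ``follows,'' whereas you have written out in full the natural injective encoding (canonical BFS spanning tree, then slot-by-slot embedding into $\mathbb T$) that the paper leaves implicit. There is nothing to correct or contrast.
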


\begin{prop}\label{iXprop}
For any $Y\sub \eee$ and $x,b\in \mathbb Z^+$ with $b\leq |Y|/2$, the
number of possibilities for an $X\sub \eee$ with $|X|=x$, $i_X\leq b$
and each 2-component of $X$ meeting $Y$ is at most
\beq{bd-1}
\Cc{|Y|}{b} d^{O(x)}
\enq
(and similarly with $\eee$ replaced by $\Oh$).
\end{prop}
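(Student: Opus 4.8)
The plan is to count the data $(X, i_X)$ by first fixing how $X$'s 2-components touch $Y$, then bounding the number of ways to grow each 2-component out from its contact point with $Y$. Suppose $X$ has 2-components $X_1\dots X_t$ with $t\le b$, and for each $j$ pick a vertex $y_j\in X_j\cap Y$ (nonempty by hypothesis). Since distinct 2-components of $X$ have disjoint neighborhoods, the $y_j$ are distinct, so the unordered set $\{y_1\dots y_t\}$ is a $t$-subset of $Y$; summing $\Cc{|Y|}{t}$ over $t\le b$ and using $b\le|Y|/2$ (so the binomial coefficients are increasing) bounds this part by $b\,\Cc{|Y|}{b}$, which is absorbed into the $d^{O(x)}$ factor. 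So it remains to bound, given the contact vertices, the number of completions to an actual $X$.

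For this I would pass to an auxiliary graph on $\eee$ in which two even vertices are adjacent iff they are at distance exactly $2$ in $Q_d$; this graph has maximum degree $\binom{d}{2}\cdot 2 < d^2$, and the 2-components of $X$ are exactly the connected components of $X$ in this auxiliary graph. Hence each $X_j$ is a connected (in the auxiliary graph) subset containing the fixed vertex $y_j$, and by Lemma~\ref{treelemma} the number of choices for a connected subset of size $x_j$ containing $y_j$ is at most $(e d^2)^{x_j}$. Multiplying over $j$ and noting $\sum_j x_j = x$, the number of ways to choose the tuple $(X_1\dots X_t)$ given the contact vertices is at most $\prod_j (ed^2)^{x_j} = (ed^2)^{x} = d^{O(x)}$. (The number of compositions $x=x_1+\cdots+x_t$ is at most $2^{x-1}$ by Proposition~\ref{compprop}, again absorbed into $d^{O(x)}$.) The value $i_X$ is then determined as $t$, so no extra factor is needed for it.

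Combining the two parts gives the bound $b\,\Cc{|Y|}{b}\cdot (ed^2)^x\cdot 2^{x-1} = \Cc{|Y|}{b}\, d^{O(x)}$, as claimed; the argument is symmetric in $\eee$ and $\Oh$. The one point requiring a little care—and the main potential obstacle—is making sure the objects counted really are \emph{2-components} of $X$ and not merely some finer partition: one must check that if $u,v\in X$ are $2$-linked \emph{within $X$} then they lie in the same auxiliary-graph component, which is immediate since a $Q_d$-path of length $\le 2$ between even vertices is either trivial or has length exactly $2$; and conversely that auxiliary adjacency is genuine $2$-linkedness in $Q_d$. Everything else is routine bookkeeping of the kind the paper elsewhere folds into ``$d^{O(x)}$''.
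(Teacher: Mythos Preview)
Your proof is correct and follows essentially the same route as the paper's: fix the number of 2-components and their sizes via compositions (Proposition~\ref{compprop}), pick a root in $Y$ for each component, and then grow each component as a connected set in the distance-$2$ graph on $\eee$ via Lemma~\ref{treelemma}. The paper orders these steps as composition $\to$ roots $\to$ growth and uses $\Cc{|Y|}{i_X}\le \Cc{|Y|}{b}$ directly rather than summing over $t$, but these are cosmetic differences; two minor slips (the auxiliary-graph degree is $\binom{d}{2}$, not $2\binom{d}{2}$, and the extra factor in your root count is really $\min(b,x)\le x$ rather than $b$) do not affect the bound.
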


\begin{proof}
The number of possibilities for the (say ordered, though this overcounts) list of sizes, say $x_1\dots x_t$,
of the 2-components of $X$
is at most the number of compositions of $x$, so at most $2^{x-1}$ (see Proposition~\ref{compprop}).
Given this list---so also $i_X$---the number of ways to choose ``roots" in $Y$
for the 2-components is at most $\C{|Y|}{i_X}\leq \C{|Y|}{b}$,
and then Lemma~\ref{treelemma} bounds
the number of ways to complete the 2-components by $(ed^2)^{\sum x_i}=d^{O(x)}$,
which absorbs the initial $2^{x-1}$.\end{proof}

\subsection{Isoperimetry}

As is common in this area, we will need to know a little about
isoperimetric behavior of small subsets of $Q_d$:
\begin{lemma}\label{1overd}
For $A $ a subset of $\cE$ or $\cO$ and $k=d^{o(1)}$,
\[ \mbox{if } a:=|A|=d^k, \mbox{ then } |N(A)|> (1-o(1))(ad/k).\]
\end{lemma}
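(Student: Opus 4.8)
The plan is to reduce to the vertex-isoperimetric inequality for the cube and then estimate the boundary of a Hamming ball. Since $A$ lies in one side of the bipartition, $N(A)$ lies in the other side and is disjoint from $A$, so $|N(A)|$ is exactly the vertex boundary $|N(A)\sm A|$ of $A$ in $Q_d$. By Harper's vertex-isoperimetric theorem this is minimized, over all $a$-element subsets of $\{0,1\}^d$ (a fortiori over $a$-subsets of $\cE$), by an initial segment $B$ of the simplicial (Hamming-ball) order, so it suffices to bound $|N(B)\sm B|$ from below. (One could instead avoid quoting Harper for mixed-parity sets by applying Kruskal--Katona to the lower shadow of each weight layer of $A$, summing over the pairwise disjoint odd layers, and also using the upper-shadow version; but that bookkeeping is essentially the proof of Harper's theorem.)

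Write $a=\binom{d}{\le r-1}+t$ with $0\le t<\binom dr$, so $B$ consists of all vertices of weight $\le r-1$ together with $t$ vertices of weight $r$. The numeric input is $r=(1+o(1))k$: from $a=d^k=2^{o(d)}$ and $k=d^{o(1)}$ one gets $r=d^{o(1)}$, hence $\log r!=O(r\log r)=o(r\log d)$, so $\log\binom dm=(1+o(1))m\log d$ for $m\in\{r-1,r\}$, and $\log\binom{d}{r-1}\le\log a\le\log\binom{d}{\le r}$ pins $k=(1+o(1))r$. (This step uses $k\to\infty$, which holds in the intended applications.) Now $N(B)\sm B$ contains every weight-$r$ vertex outside $B$ --- each such vertex has all $r$ of its weight-$(r-1)$ neighbours in $B$ --- giving $\binom dr-t$ vertices, and it also contains the upper shadow, in weight $r+1$, of the $t$ chosen weight-$r$ vertices, which by the complement form of Kruskal--Katona has size at least $t\cdot\frac{d-r}{r+1}=(1-o(1))\tfrac{td}{r+1}$. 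Hence, with $a=(1+o(1))\binom{d}{r-1}+t$,
\[
|N(B)\sm B|\ \ge\ \Big(\binom dr-t\Big)+(1-o(1))\tfrac{td}{r+1}\ \ge\ (1-o(1))\tfrac{d}{r+1}\Big(\binom{d}{r-1}+t\Big)\ \ge\ (1-o(1))\tfrac{ad}{r+1},
\]
where the middle step uses $\binom dr=(1-o(1))\tfrac dr\binom{d}{r-1}\ge(1-o(1))\tfrac{d}{r+1}\binom{d}{r-1}$ together with $\tfrac{d}{r+1}\gg1$. Thus $|N(A)|\ge|N(B)\sm B|\ge(1-o(1))ad/(r+1)=(1-o(1))ad/k$, as claimed.

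The only genuinely delicate ingredient is this final estimate --- confirming that a Hamming-ball-like set of size $d^k$ already expands by the factor $(1-o(1))d/k$. Everything else (Harper, Kruskal--Katona, the elementary identity $r=(1+o(1))k$) is off the shelf, and I anticipate no difficulty beyond keeping the $(1-o(1))$ factors honest, in particular the passages $\binom{d}{\le r-1}\to\binom{d}{r-1}$ and $r+1\to k$, both legitimate precisely because $r=d^{o(1)}$ and $k\to\infty$.
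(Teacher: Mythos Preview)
Your argument is correct and follows the same overall strategy as the paper's: reduce via an isoperimetric inequality to a Hamming-ball-type extremal set, then check that such a set of size $d^k$ expands by a factor $(1-o(1))d/k$.  The one genuine difference is in the isoperimetric input.  The paper invokes the K\"orner--Wei theorem \cite{KW}, which says that among subsets of $\cE$ the neighborhood is minimized by \emph{even} Hamming balls $B(v,l)\sub A\sub B(v,l+2)$, and then estimates $|N(B(\un 0,l))|$ directly.  You instead apply the classical Harper theorem on the full cube, observing that for $A\sub\cE$ one has $|N(A)|=|N(A)\sm A|$, so a lower bound on the vertex boundary over \emph{all} $a$-subsets of $\{0,1\}^d$ suffices; you then estimate the boundary of the (mixed-parity) simplicial initial segment via an elementary upper-shadow count (your ``complement form of Kruskal--Katona'' is really just the double-counting bound $|\partial^+T|\ge t(d-r)/(r+1)$, which is all you need).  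Your route has the advantage of using the more widely known Harper theorem rather than the parity-restricted result of \cite{KW}; the paper's route gives the sharp extremal configuration within $\cE$, but since both extremals (even and full Hamming balls) have boundary $\sim ad/k$ once $k\to\infty$, this extra precision is not needed here.  Your explicit acknowledgement that the step $r+1\sim k$ requires $k\to\infty$ is appropriate---the same assumption is implicit in the paper's ``$l$ is asymptotic to $k$.''
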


\begin{proof}
This is similar to \cite[Lemma 6.1]{G}---and a routine application of \cite{KW}---so we will
be brief, referring to \cite{G} for some elaboration.

It is of course enough to consider $A\sub \eee$.
By the main theorem of \cite{KW} (see \cite[Lemma 1.10]{G})
we may assume $A$ is an even Hamming ball; that is,
\beq{balls}
B(v,l)\sub A\sub B(v,l+2),
\enq
for some $v$ and $l$ with $l\equiv |v|\pmod{2}$
(where $|v| =\sum v_i$ and, with $\rho$ denoting distance, $B(v,r)=\{ w \in \cE:\rho(v,w)\leq r\}$
is the even Hamming ball of radius $r$ about $v$).
We just discuss $v\in \eee$, in which case we may assume $v=\underline{0}$.

Elementary calculations show that (assuming $a$ is as in the lemma)
the $l$ in \eqref{balls} is asymptotic to $k$
(since for $l=d^{o(1)}$, $|\C{[d]}{\leq l}\cap \eee|  = d^{l-o(l)}$).
It's then easy to see that each of $|N(B(\underline{0}, l))|,|N(B(\underline{0}, l+2))|$
is asymptotic to $ad/k$, and the lemma follows.\end{proof}

\subsection{Entropy} \label{Entropy}
We next briefly recall relevant entropy background; see e.g.\ \cite{McE} for a less
hurried introduction.

Let $X, Y$ be discrete random variables. The binary entropy of $X$ is
\[ H(X)=\sum_x p(x) \log {\frac{1}{p(x)}},\]
where $p(x)=\pr(X=x)$ (and, recall, $\log$ is $\log_2$).
The \emph{conditional entropy of $X$ given $Y$} is
\beq{condent}
H(X|Y)=\sum_y p(y)\sum_x p(x|y)\log \frac{1}{p(x|y)}
\enq
(where $p(x|y)=\pr(X=x|Y=y)$).

The next lemma lists a few basic properties.

\begin{lemma}\label{entropyprop}~
\begin{enumerate}
\setlength\itemsep{.5em}
\item[{\rm (a)}]
$H(X) \le \log |\mbox{Range}(X)|$, with equality iff $X$ is uniform from its range;

\item[{\rm (b)}]
$H(X,Y)=H(X)+H(Y|X)$;

\item[{\rm (c)}]
$H(X_1\dots X_n|Y) \le \sum H(X_i|Y)$ (note $(X_1\dots X_n)$ is a discrete r.v.);

\item[{\rm (d)}]
if $Z$ is determined by $Y$, then
$H(X|Y) \le H(X|Z)$.
\end{enumerate}
\end{lemma}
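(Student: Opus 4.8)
The four parts of Lemma~\ref{entropyprop} are the standard basic properties of Shannon entropy, and I would prove them in the order listed, each from the definitions in \eqref{condent} and the preceding display, using only convexity of $t\mapsto t\log(1/t)$ (equivalently, Jensen's inequality for the concave function $\log$). The only mildly subtle point is bookkeeping with conditional distributions, so I would set up notation once: write $p(x)=\pr(X=x)$, $p(y)=\pr(Y=y)$, $p(x,y)=\pr(X=x,Y=y)$, and $p(x|y)=p(x,y)/p(y)$ for $y$ with $p(y)>0$ (terms with $p(y)=0$ contribute nothing and are dropped), and similarly for the vector variable $(X_1\dots X_n)$.

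\textbf{Parts (a) and (b).} For (a), $H(X)=\sum_x p(x)\log(1/p(x))$ is, by Jensen applied to the concave $\log$ with weights $p(x)$, at most $\log\!\left(\sum_x p(x)\cdot(1/p(x))\right)=\log|\{x:p(x)>0\}|\le\log|\mathrm{Range}(X)|$; strict concavity of $\log$ gives equality iff all the nonzero $p(x)$ are equal, i.e.\ $X$ is uniform on its range. For (b), expand $H(X,Y)=\sum_{x,y}p(x,y)\log\frac{1}{p(x,y)}$, substitute $p(x,y)=p(x)p(y|x)$ and $\log\frac1{p(x,y)}=\log\frac1{p(x)}+\log\frac1{p(y|x)}$, and split the double sum: the first piece is $\sum_x\left(\sum_y p(x,y)\right)\log\frac1{p(x)}=\sum_x p(x)\log\frac1{p(x)}=H(X)$, and the second piece is $\sum_x p(x)\sum_y p(y|x)\log\frac1{p(y|x)}=H(Y|X)$ by \eqref{condent}. (Here I use $H(X,Y)=H(Y,X)$, immediate from the definition, to match the statement as written.)

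\textbf{Parts (c) and (d).} For (d), with $Z$ a function of $Y$, group the outer sum in \eqref{condent} over the fibers $\{y:Z(y)=z\}$: for each $z$ with $\pr(Z=z)>0$ and each $x$, the map $t\mapsto t\log(1/t)$ is concave, so $\sum_{y:Z(y)=z}\frac{p(y)}{\pr(Z=z)}\,p(x|y)\log\frac1{p(x|y)}\le \left(\sum_{y:Z(y)=z}\tfrac{p(y)}{\pr(Z=z)}p(x|y)\right)\log\frac1{(\cdots)}=p(x|z)\log\frac1{p(x|z)}$, since $\sum_{y:Z(y)=z}\frac{p(y)}{\pr(Z=z)}p(x|y)=p(x|z)$; multiplying by $\pr(Z=z)$, summing over $x$ and $z$, and comparing with \eqref{condent} for both sides yields $H(X|Y)\le H(X|Z)$. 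For (c) I would first record \emph{conditional subadditivity for two variables}, $H(U,V|Y)\le H(U|Y)+H(V|Y)$: this follows by applying the conditional analogue of (b), $H(U,V|Y)=H(U|Y)+H(V|U,Y)$ (proved exactly as (b) but with every probability conditioned on $Y=y$ and then averaged over $y$), together with the conditional form of (d)'s corollary $H(V|U,Y)\le H(V|Y)$ (apply (d) with the pair $(U,Y)$ in the role of $Y$, since $Y$ is determined by $(U,Y)$); then (c) for general $n$ follows by induction, writing $H(X_1\dots X_n|Y)\le H(X_1|Y)+H(X_2\dots X_n|Y)$ and iterating.

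\textbf{Main obstacle.} There is no real obstacle—these are textbook facts—so the only thing to get right is the chain-rule/conditioning bookkeeping, in particular deriving the \emph{conditional} versions of (b) and (d) cleanly so that (c) drops out by a one-line induction; I would state the conditional chain rule $H(U,V|Y)=H(U|Y)+H(V|U,Y)$ as a displayed intermediate step to keep the argument transparent. Alternatively, one could cite \cite{McE} and give only the two-variable subadditivity computation in detail, but since the lemma is stated with proof in the paper I would include the short arguments above in full.
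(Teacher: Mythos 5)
Your proofs are correct: (a) is the standard Jensen argument, (b) the chain rule, (d) the concavity-of-$t\log(1/t)$ argument over the fibers of $Z$, and (c) follows cleanly from the conditional chain rule together with your application of (d) to the pair $(U,Y)$. The paper itself gives no proof of this lemma --- it is stated as a list of standard facts with a pointer to \cite{McE} --- so your write-up simply supplies the canonical textbook arguments the authors took for granted; there is nothing to flag.
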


We also need the following version of \emph{Shearer's Lemma} \cite{Sh}.
(This statement is more general than the original, but is easily
extracted from the proof in \cite{Sh}.)
\begin{lemma}\label{lem:Sh}
If $X=(X_1, \dots, X_k)$ is a random vector and $\alpha:2^{[k]}\rightarrow \mathbb R^+$
satisfies
\beq{fractiling}
\sum_{A\ni i}\alpha_A =1 \quad \forall i\in [k],
\enq
then
\beq{Shearer}
H(X)\leq \sum_{A\subseteq [k]}\alpha_AH(X_A),
\enq
where $X_A=(X_i:i\in A)$.
\end{lemma}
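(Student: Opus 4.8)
The plan is to reduce \eqref{Shearer} to the chain rule for entropy (Lemma~\ref{entropyprop}(b)) via the standard accounting argument behind Shearer's inequality. Fix the natural order on $[k]$ and write $X_{[i-1]}=(X_1,\ldots,X_{i-1})$ (so $X_{[0]}$ is empty).

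First I would expand the right-hand side of \eqref{Shearer}. For a fixed $A=\{i_1<\cdots<i_m\}\subseteq[k]$, iterating Lemma~\ref{entropyprop}(b) gives $H(X_A)=\sum_{i\in A}H(X_i|X_{A\cap[i-1]})$. Since $A\cap[i-1]\subseteq[i-1]$, the tuple $X_{A\cap[i-1]}$ is determined by $X_{[i-1]}$, so Lemma~\ref{entropyprop}(d) yields $H(X_i|X_{A\cap[i-1]})\ge H(X_i|X_{[i-1]})$. Hence
\[
\sum_{A\subseteq[k]}\alpha_AH(X_A)\;\ge\;\sum_{A\subseteq[k]}\alpha_A\sum_{i\in A}H(X_i|X_{[i-1]})\;=\;\sum_{i=1}^{k}\Big(\sum_{A\ni i}\alpha_A\Big)H(X_i|X_{[i-1]}),
\]
the interchange of summations being harmless since $\alpha$ is nonnegative. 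Now hypothesis \eqref{fractiling} says each bracketed sum equals $1$, so the right-hand side is $\sum_{i=1}^{k}H(X_i|X_{[i-1]})$, which by Lemma~\ref{entropyprop}(b) applied iteratively (i.e.\ the chain rule for $H(X)=H(X_1,\ldots,X_k)$) equals $H(X)$. This is \eqref{Shearer}.

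I do not expect any genuine obstacle: the argument is essentially a one-line application of the chain rule for entropy (Lemma~\ref{entropyprop}(b)) together with the fact that conditioning on less cannot decrease entropy (Lemma~\ref{entropyprop}(d)). The only points meriting a word of care are the correct orientation of Lemma~\ref{entropyprop}(d) (applied with ``$Y$''$\,=X_{[i-1]}$ and ``$Z$''$\,=X_{A\cap[i-1]}$, which is indeed a function of $Y$), and the observation that nothing beyond \eqref{fractiling} is needed---in particular the $\alpha_A$ need not be rational or arise from a covering family, which is exactly why this statement is slightly more general than the original version in \cite{Sh}.
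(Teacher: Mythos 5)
Your proof is correct, and it is the standard chain-rule argument for Shearer's inequality; the paper itself gives no proof, only noting that the (slightly generalized) statement "is easily extracted from the proof in \cite{Sh}," and your argument is exactly that extraction. Both applications of Lemma~\ref{entropyprop} (the iterated chain rule in (b) and the correctly oriented monotonicity in (d)) are used as intended, and your closing remark that only \eqref{fractiling} is needed is precisely why the generalization holds.
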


\subsection{Sapozhenko and Galvin}\label{SandG}

\mn
Finally we recall what we need from the aforementioned results of Sapozhenko and Galvin
(adapted to present purposes; see remarks following Lemma~\ref{agbh}).

\mn

For $A \subset V$, the \textit{closure} of $A$ is $[A]=\{x \in V : N(x) \subseteq N(A)\}$.
Given $A$ (always a subset of $\eee$ or $\Oh$),  we use $G$ and $B$ for $N(A)$ and
$B(A)$ ($:=\{y:N(y)\sub A\}$).

Let
\beq{cGag}
\cG( g)=\{A \subset \cE \mbox{ 2-linked }:  |G|=g\}
\enq
and
\[
\cH(g, b)=\{A \subset \cE \mbox{ 2-linked }: |G|=g, |B|=b \}.
\]

\mn

The first of our lemmas here is from \cite{Sap87} but, as mentioned earlier,
we refer to the more accessible \cite[Lemma 3.1]{GS}:
\begin{lemma}\label{ag}
For any $\gamma <2$ and each $g\in [d^4,\gamma^d]$,
\[|\cG(g)| \le 2^{g-\gO(g/\log d)}.\]
\end{lemma}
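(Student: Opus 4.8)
The statement I am asked to prove, Lemma~\ref{ag}, is exactly Sapozhenko's bound on the number of 2-linked subsets $A$ of $\cE$ with $|N(A)|=g$, and the paper explicitly says it is quoting this from \cite{Sap87} (via the exposition \cite[Lemma 3.1]{GS}). So my "proof" is really a reconstruction of Sapozhenko's \emph{container/graph-granulation} argument, specialized to $Q_d$. Let me sketch how that argument goes.

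The plan is to show that every such $A$ can be encoded by a small amount of data, so that the number of possibilities is controlled. First I would reduce from $A$ to its closure $[A]$: since $N([A])=N(A)=G$ and $[A]$ is the maximal set with neighborhood contained in $G$, the pair $(A,[A])$ is determined once we know $A$, and it suffices to bound the number of \emph{closed} 2-linked sets with $|N(A)|=g$ and then multiply by the (harmless) number of ways to choose $A$ inside $[A]$ — in fact one bounds the number of possible $G$'s directly. The heart is the \emph{approximation} step: one builds, from $G=N(A)$, a pair of sets $(S,T)$ with $S\subseteq A$, $G\subseteq T\subseteq N(S)$ (or the analogous containment in $GS$), where $S$ and $T$ are ``granular'' — each is a union of Hamming balls of some fixed radius $t\asymp \log d$ around a controlled number of centers, roughly $O(g/d)$ of them. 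The key isoperimetric input is that a 2-linked set with $|N(A)|=g$ has $|A|\le (1+o(1))g/d$ and sits in a neighborhood-expanding regime, which is precisely the kind of estimate underlying Lemma~\ref{1overd}; this is what forces the number of centers to be $O(g/d)$ rather than $O(g)$. Counting the ways to pick the centers (at most $\binom{N}{O(g/d)}\le 2^{O((g/d)\log N)}=2^{O(g)}$, but with a gain: the exponent is $O((g/d)\cdot d)=O(g)$ and the \emph{savings} comes in at the next stage) is the first half of the bookkeeping.

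Given the granular approximation $(S,T)$, the second half reconstructs $G$ exactly: $G$ differs from $T$ only within the ``fuzzy boundary'' $\Psi:=N(S)\setminus B(T)$ (vertices whose membership in $G$ is not yet decided), and the content of Sapozhenko's lemma is that $|\Psi|=O(g/\log d)$ — this is where the $\log d$ denominator is born, and it is the main obstacle. The bound on $|\Psi|$ is again an isoperimetric/expansion fact about $Q_d$: having chosen $t\asymp\log d$, a ball of radius $t$ has boundary-to-interior ratio about $d/t\asymp d/\log d$, so after covering $A$ by such balls the portion of $G$ not pinned down by $B(T)$ is only a $1/\log d$ fraction of $g$. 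Once $|\Psi|=O(g/\log d)$ is in hand, the number of ways to specify $G\cap\Psi$ (hence $G$) given $(S,T)$ is at most $2^{|\Psi|}=2^{O(g/\log d)}$.

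Finally I would combine the two counts. The number of choices of granular $(S,T)$ is, after the isoperimetric input, at most $2^{g-\Omega(g/\log d)}$: intuitively, specifying $O(g/d)$ centers costs far less than $2^g$ — it costs $2^{O((g/d)\log d)}=2^{o(g)}$ in the relevant range $g\in[d^4,\gamma^d]$ — and the ``missing'' $\Omega(g/\log d)$ in the exponent reflects that a genuine set $A$ with $|N(A)|=g$ is much more constrained than an arbitrary $g$-subset of $\Oh$, the slack being exactly the fuzzy-boundary gain. Multiplying by the $2^{O(g/\log d)}$ choices for $G\cap\Psi$ does not erase the $\Omega(g/\log d)$ savings (the constants are arranged so the net exponent is $g-\Omega(g/\log d)$), and this is the claimed bound. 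The hard part, to repeat, is the quantitative fuzzy-boundary estimate $|\Psi|=O(g/\log d)$ together with the matching bound on the number of granular pairs; everything else is routine counting of Hamming balls and compositions (Proposition~\ref{compprop}, Lemma~\ref{treelemma}). Since the paper is content to cite \cite[Lemma 3.1]{GS} for all of this, I would in practice simply invoke that reference rather than reproduce the granulation machinery in full.
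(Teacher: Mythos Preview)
Your bottom line matches the paper exactly: Lemma~\ref{ag} is not proved in the paper but is quoted from \cite{Sap87} via \cite[Lemma~3.1]{GS}, and you correctly propose simply invoking that reference.

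That said, your informal reconstruction of Sapozhenko's argument has a couple of inaccuracies worth flagging. First, the containments in the approximation step are reversed: in the version the paper uses (Lemma~\ref{approx}), one has $S\supseteq [A]$ and $F\subseteq G$, not $S\subseteq A$ and $G\subseteq T$; the approximating set on the $A$-side is a \emph{superset} of the closure, and the one on the $G$-side is a \emph{subset} of $G$. Second, your accounting for where the exponent $g-\Omega(g/\log d)$ comes from is muddled: the granular approximation itself costs only $2^{O(g\log^2 d/d)}$ (this is exactly the content of Lemma~\ref{approx}), which is $2^{o(g/\log d)}$, so it contributes essentially nothing; the main term $2^{g-\Omega(g/\log d)}$ arises in the \emph{reconstruction} step, where one counts possible $G$'s (equivalently closed $A$'s) compatible with a fixed $(S,F)$, and the savings over the trivial $2^g$ comes from the expansion-type gap $g-a\ge\Omega(g/\sqrt{d})$ (see the remark following Lemma~\ref{agbh}) combined with the degree condition \eqref{Pi2}-style on $S$. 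Your sketch instead attributes the $2^{g-\Omega(g/\log d)}$ to the approximation count and treats the reconstruction as a further $2^{O(g/\log d)}$ factor, which inverts the roles. None of this affects the legitimacy of citing the result, but if you want the sketch to be faithful to \cite{GS} you should fix these two points.
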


\mn

The next lemma was originally a step in the proof of
Lemma~\ref{ag}, but will also play an independent role below.

\begin{lemma}[\cite{GS}, Lemmas 5.3-5.5]\label{approx}
For $g$ as in Lemma~\ref{ag} and $\g=\g(g)$, there are $\W=\cW( g)\sub 2^\cE \times 2^\cO$
with
\[|\cW| = 2^{O(g\log ^2d/d)}\]
and $\vp=\vp_g:\g\ra \W$ such that for each $A\in \g$,
$(S,F):=\vp(A)$ satisfies:
\begin{enumerate}
\setlength\itemsep{.5em}
\item[{\rm (a)}] $S \supseteq [A]$, $F \subseteq G$;
\item[{\rm (b)}]  $d_F(u) \ge d-d/\log d$ $\forall u \in S$.
\end{enumerate}
\end{lemma}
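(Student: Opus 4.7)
The plan is to follow Sapozhenko's classical container-style approximation technique: encode each pair $(S, F)$ by a small 2-linked ``seed'' $T \subseteq \cE$, and count the seeds via Lemma~\ref{treelemma}.

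Set $\psi = d/\log d$. Given $A \in \cG(g)$, I would attempt to build a seed via the following greedy procedure. Starting from any $v \in [A]$, iteratively append a vertex $u \in [A]$ with $|N(u) \setminus N(T)| > \psi$ whenever possible, preferring $u$'s that are 2-linked to $T$ (which is feasible because $[A]$ is 2-linked---it inherits this from $A$ since any $u \in [A]$ has $N(u) \subseteq G = N(A)$, hence is 2-linked through $G$ to some vertex of $A$). Each iteration enlarges $N(T) \cap G$ by at least $\psi$, so since $|G| = g$ the procedure terminates with $|T| \leq g/\psi = O(g \log d / d)$. I then set $F := N(T) \cap G$ and $S := \{u \in \cE : d_F(u) \geq d - \psi\}$. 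By construction $F \subseteq G$, and (b) holds by definition of $S$. Also $[A] \subseteq S$, because at termination every $u \in [A]$ satisfies $|N(u) \setminus N(T)| \leq \psi$, so $|N(u) \cap F| \geq d - \psi$. This verifies (a).

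To count: $(S, F)$ is determined by $T$, so it suffices to count seeds. Since $T$ is 2-linked in $Q_d$, applying Lemma~\ref{treelemma} to the distance-2 graph of $Q_d$ (which has maximum degree $O(d^2)$) bounds the number of $t$-element 2-linked seeds containing a fixed root by $d^{O(t)}$. Multiplying by the $N$ choices of root and summing over $t \leq O(g \log d / d)$:
\[
|\cW| \leq N \cdot d^{O(g \log d/d)} = 2^{d + O(g \log^2 d/d)} = 2^{O(g \log^2 d/d)},
\]
where the final equality uses $g \geq d^4$ to absorb the $2^d$ factor.

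The main obstacle is that the single-pass greedy above may fail to keep $T$ 2-linked: after some iterations, the remaining deficient vertices of $[A]$ could conceivably be separated from $T$ in the distance-2 graph even though $[A]$ itself is 2-linked in $Q_d$ (the 2-connecting path through $[A]$ need not consist of deficient vertices). A clean way around this---and the route taken in \cite{GS}---is a two-phase construction: first a ``rough cover'' (\cite[Lem.~5.3]{GS}) producing an easily 2-linked seed that may not yet satisfy (b), then a local refinement (\cite[Lem.~5.4]{GS}) that patches remaining deficiency using $O(g \log d/d)$ additional vertices, with the final translation to $(S, F)$ packaged in \cite[Lem.~5.5]{GS}. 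This separation of concerns avoids awkward case analysis in a single greedy pass while still yielding the same asymptotic count.
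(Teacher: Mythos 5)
The paper does not actually prove this lemma: it is imported (in simplified form) from \cite{GS}, Lemmas 5.3--5.5, so there is no internal proof to compare against. Your reconstruction is the standard Sapozhenko container argument and is essentially correct; moreover, the ``main obstacle'' you flag---that the single-pass greedy might leave $T$ disconnected in the distance-$2$ graph---is not actually fatal, so you need not retreat to the two-phase construction. At termination of your greedy every $u\in[A]$ has $|N(u)\sm N(T)|\le \psi<d$, hence has a neighbor in $N(T)$, i.e.\ lies within distance $2$ of $T$. Since $[A]$ is $2$-linked and $T\sub [A]$, any two vertices of $T$ are joined by a chain $w_0,\ldots,w_k$ in $[A]$ with consecutive terms at distance at most $2$; replacing each $w_i$ by a vertex of $T$ at distance at most $2$ from it shows $T$ is $6$-linked. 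Applying Lemma~\ref{treelemma} to the ``distance at most $6$'' graph on $\cE$ (maximum degree $O(d^6)$) still yields $d^{O(t)}$ seeds of size $t$ per choice of root, which is all your count requires; the preference for $u$'s that are $2$-linked to $T$ can simply be dropped. Two minor points worth making explicit: (i) $F=N(T)\cap G$ is determined by $T$ alone---a priori it looks as if it depends on $A$---because $T\sub[A]$ forces $N(T)\sub G$, so in fact $F=N(T)$; (ii) the absorption of the $2^{d}$ root cost into $2^{O(g\log^2 d/d)}$ does use $g\ge d^4$, as you note, and this is consistent with the range of $g$ in Lemma~\ref{ag}.
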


\mn

\begin{lemma}[\cite{G}, Lemma 7.1]\label{agbh}
For any $\gamma <2$ and each $g, b\le \gamma^d$,
\[|\cH(g, b)| < 2^{d}2^{g-b-\gO(g/\log d)}.\]
\end{lemma}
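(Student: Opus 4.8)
The plan is to bound $|\cH(g,b)|$ by composing the approximation map $\vp=\vp_g$ from Lemma~\ref{approx} with a count of the sets $A$ mapping to a given pair $(S,F)\in\W$, the point being that within a fiber of $\vp$ the presence of the large set $B=B(A)$ forces $A$ to be nearly determined. So fix $(S,F)\in\W$ and consider $A\in\cH(g,b)$ with $\vp(A)=(S,F)$. By part (a) of Lemma~\ref{approx} we have $[A]\sub S$, and since every $y\in B(A)$ has $N(y)\sub A\sub N(A)$, in fact $B(A)\sub[A]\sub S$; thus $B$ is a $b$-subset of $S$. Since $A\sub N(B(A))$ fails in general, the real leverage is the reverse: on the ``fine'' part of $G$ — vertices $u\in S$ with $d_F(u)\ge d-d/\log d$, which by (b) is \emph{all} of $S$ — the neighborhood $N_u$ is almost entirely inside $F\sub G=N(A)$, so $A$ and $S$ have very similar boundary structure, and I can recover $A$ from $B$ up to a small correction.

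The key steps, in order. First, fix $(S,F)$; there are $|\W|=2^{O(g\log^2 d/d)}$ choices, which is $2^{o(g/\log d)}$ and so harmless. Second, choose $B=B(A)\sub S$; naively this costs $\C{|S|}{b}$, but a cruder bound $2^{|S|}\le 2^{O(g)}$ is \emph{not} affordable against the target $2^{g-b}$ — instead I want $\C{g'}{b}$ where $g'=|S|$ is not much more than $g$, and crucially I will pair this with a factor that has $2^{-g'}$ or $2^{-b}$ built in. Third, and this is the crux: reconstruct $A$ from $(S,F,B)$. Here I use that $A\setminus N(B)$ is small. Indeed, consider the bipartite graph between $S$ and $A$; each $u\in S$ sends $\ge d-d/\log d$ edges into $F\sub N(A)$, hence misses $\le d/\log d$ vertices of its neighborhood from... — the clean way is: $|\nabla(S\setminus F\text{-structure})|$ is small, so $|A\setminus N(B)|\le |S|\cdot(d/\log d)/\,(\text{something})$; more carefully, Galvin's argument shows the symmetric difference of $A$ with a set determined by $(S,F)$ has size $O(g/\log d)$, costing only $\binom{?}{O(g/\log d)}d^{O(g/\log d)}=2^{O(g\log d/\log d)}$... this must come out as $2^{o(g)}\cdot 2^{O(g/\log d)}$, i.e.\ absorbed into the $\gO(g/\log d)$ error term. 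Fourth, collect: the count is
\[
|\cH(g,b)|\ \le\ |\W|\cdot \Cc{|S|}{b}\cdot 2^{O(g/\log d)}\cdot 2^{d},
\]
and the extra $2^d$ is the slack we are allowed (it is where the ``$+2^d$'' in the statement comes from — presumably handling the finitely-many vertices or the choice of where $B$ sits relative to $N(A)$). Finally, turn $\Cc{|S|}{b}$ into the needed $2^{g-b}$: since $|S|\le g+O(g\log^2 d/d)$ and, by a convexity/entropy estimate, $\C{g'}{b}\le 2^{g'}\cdot 2^{-(g'-b)\cdot(\text{const})}$ is too weak, I instead use the identity-level bound $\Cc{g'}{b}\le \Cc{g'}{g'-b}$ together with $g'-b\ge 0$... — the honest route is $\log\C{g'}{b}\le g'-b+O(\log g')$ \emph{when $b$ is close to $g'$}, but for $b$ far from $g'$ we need the dual picture: then $N(B)$ is genuinely smaller than $N(A)=G$, and the count of $A\supseteq N(B)$ with $|N(A)|=g$ is itself the quantity $|\g(g)|$-type object, handled by Lemma~\ref{ag}. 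So the proof splits on whether $g-b$ is small (reconstruct $A$ near $N(B)$, pay $\C{g}{b}$) or large (apply Lemma~\ref{ag} to get $|\cG(g)|\le 2^{g-\gO(g/\log d)}$ and note $2^{g-b}\ge 2^{\gO(g/\log d)}\cdot 2^{g-\gO(g/\log d)}$ when $b\le g-\gO(g/\log d)$, which fails exactly in the small-$g-b$ regime — so the two cases dovetail).

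The main obstacle I anticipate is getting the factor $\C{|S|}{b}$ (choice of $B$) to multiply against the reconstruction cost and still land at $2^{g-b}$ rather than $2^{g}$: the subtraction of $b$ has to be \emph{paid for}, and the only source of a $2^{-b}$-type saving is that specifying $B$ pins down a large chunk of $A$ — so the bookkeeping has to route the ``$b$ bits of information in $B$'' into ``$b$ fewer bits needed to describe $A$,'' i.e.\ an encoding of $A$ as $(\text{description of }N(B)\approx A) + (\text{small correction})$ in which the $N(B)$ part contributes $\le g-b$ to the exponent. Making that accounting airtight — essentially a Sapozhenko-style ``container + defect'' argument with the defect bounded via Lemma~\ref{approx}(b) — is where the work is; everything else ($|\W|$, the tree lemma for the correction term, the $2^d$ slack) is routine given the tools assembled above.
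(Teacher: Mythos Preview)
The paper does not actually prove this lemma: it is quoted from \cite[Lemma~7.1]{G}, and the ``Remarks'' following it merely explain that dropping the root-vertex and size parameters from Galvin's original statement costs the extra factor $2^d$. So there is no in-paper argument to compare against.

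That said, your sketch contains a concrete error that breaks the plan as written. You claim ``every $y\in B(A)$ has $N(y)\sub A\sub N(A)$, in fact $B(A)\sub[A]\sub S$.'' But in the bipartite setting with $A\sub\cE$ we have $N(A)\sub\cO$, so $A\not\sub N(A)$; and $B(A)=\{y:N(y)\sub A\}$ is a subset of $\cO$ (indeed $B(A)\sub G$), while $[A]$ and $S$ live in $\cE$. Thus $B(A)\cap S=\0$, and ``choose $B$ as a $b$-subset of $S$'' is vacuous. This is not cosmetic: your whole mechanism for extracting the $2^{-b}$---specify $B$ inside $S$, then recover most of $A$ from it---rests on $B$ and $A$ sitting on the same side, which they do not.

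Even with $B$ relocated to the correct side, the bookkeeping you outline (pay $\binom{|S|}{b}$ for $B$, then a small correction for $A\setminus N(B)$) does not obviously collapse to $2^{g-b}$: the binomial alone can be as large as $2^{g}$. In Galvin's actual argument the $-b$ enters differently, via the elementary edge-count showing $|[A]\setminus A|\le g-b$ (each $y\in G\setminus B$ absorbs at most $d$ of the $d\,|[A]\setminus A|$ edges from $[A]\setminus A$ into $G$, while vertices of $B$ absorb none); one then specifies $G$ cheaply from $F$, reads off $[A]$, and recovers $A$ by naming the at most $g-b$ vertices of $[A]\setminus A$. Your instinct to split on the size of $g-b$ and to lean on Lemma~\ref{approx} is sound, but the route to the $-b$ saving needs to go through $[A]\setminus A$, not through choosing $B$.
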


\mn
\emph{Remarks.}
The preceding lemmas are special cases/consequences of
the cited results from \cite{G} and \cite{GS}, with statements somewhat simplified.
In particular we have omitted some parameters for the $\g(\cdot)$'s, e.g.\ $a$ ($:=|A|$)
and $v$ (a fixed vertex which $A$ must contain).
Dropping these specifications just multiplies bounds by a (for us) negligible $2^d$.
Also, \cite[Lemma 3.1]{GS} (the more general version of our Lemma~\ref{ag})
assumes a lower bound on $g-a$, which in our situation is
much less than what follows from Lemma~\ref{1overd}.

\section{Proof}\label{Proof}

\subsection{Orientation}\label{Preview}

We first spend a little time trying to motivate what's happening below,
hoping this makes the discussion easier to follow.
For purposes of comparison we begin with a standardish entropy-based bound.

\mn

Given $\g\sub \f^*$, set
\begin{eqnarray}\label{def_tu}
T(u)=T_\g(u) &=&
\tfrac{1}{d}H(f_{N_u})+H(f_u|f(N_u))\nonumber\\
&=&\tfrac{1}{d}[H(f(N_u))+H(f_{N_u}|f(N_u))]+H(f_u|f(N_u)),~~
\end{eqnarray}
where $f$ is uniform from $\g$.
(We will use this only with $u\in \Oh$.)
Then
\begin{eqnarray*}
\log |\g|=H(f)&=&
H(f_\cE)+H(f_\cO|f_\cE)\\
&\le &\frac{1}{d}\sum_{u\in \Oh} H(f_{N_u})+\sum_{u\in \Oh} H(f_u|f(N_u))
=\sum_{u\in \Oh}T(u).
\end{eqnarray*}
\nin
Here the first two equalities are given by
(a) and (b) of Lemma~\ref{entropyprop} and the inequality by
Lemmas~\ref{lem:Sh} and \ref{entropyprop} (c,d),
the former with
\beq{alphaS}
\ga_S=\left\{\begin{array}{ll}
1/d&\mbox{if $S=N_u$ for some $u\in \Oh$,}\\
0&\mbox{otherwise.}
\end{array}\right.
\enq

On the other hand, for each possible value $c$ of $f(N_u)$,
\begin{eqnarray*}
H(f_u|f(N_u)=c) &\leq & \log(4-|c|),\\
H(f_{N_u}|f(N_u)=c) &\leq & d\log|c|.
\end{eqnarray*}
Since $\log x+\log (4-x)\leq 2$,
this bounds
$\frac{1}{d}H(f_{N_u}|f(N_u))+H(f_u|f(N_u))$
(the main part of \eqref{def_tu}) by
\[
\mbox{$\sum_c\pr(f(N_u)=c)[\tfrac{1}{d}H(f_{N_u}|f(N_u)=c)+H(f_u|f(N_u)=c)]\leq 2$,}
\]
yielding
\beq{Tu}
T(u) \leq 2 +O(1/d)
\enq
(since $H(f(N_u))=O(1)$).

In particular, applying this with $\g=\f^*$ gives the easy bound
\beq{logf*}
\log|\f^*| \leq N +O(N/d),
\enq
whereas we want $\log|\f^*| < N-\go(1)$;
so what we do below may be thought of as fighting over this difference.
(Note this argument makes no use of
the fact that members of $\f^*$ are non-ideal, so can't give a bound less than $N$.)

\mn

We now very briefly sketch the actual argument.
We think of $|\f^*|$ as the number of ways to specify $f\in\f^*$,
which we do in two stages.
The first of these identifies a ``template," $\T=\T_f$, which provides some, usually
incomplete, information on $X_f$ (recall this is the set of vertices that are bad for $f$).
In fact $\T$ will completely specify $X_f\cap \eee$, but the information on $X_f\cap \Oh$
will typically be less precise.

The second (``coloring") stage then treats possibilities for $f$
given $\T$.  Thus we restrict to a set $\g$ of $f$'s satisfying $\T_f=\T$,
usually with some ``cheap" part of $f$ also specified,
and return to the entropy approach leading to \eqref{logf*}.
The hope---and basic idea of the proof---is that what we save
in the above argument by exploiting information from the template
recovers (more exactly, \emph{more than} recovers) what we've paid for said information.

\mn

In what follows we usually speak in terms of the \emph{cost} of a choice,
meaning the log of the number of possibilities for that choice, which we think of as the number of
bits ``paid" for the desired information.

\subsection{Templates}\label{Templates}

A template will consist of two parts, the first specifying
$X_f\cap \eee$ and the second corresponding to,
but not necessarily precisely identifying,
the portion of $X_f\cap \Oh$ not adjacent to $X_f\cap \eee$.
(For perspective we note that the asymmetry between $\eee$ and $\Oh$
corresponds to \eqref{f*def} in the definition of $\f^*$, an assumption
we will use frequently below.)

\mn

Names for the sets involved will now be helpful;
for a particular $f$ we use the following notation,
with dependence on $f$ suppressed (so $X=X_f$, $A_i=A_i(f)$ and so on).

\mn

$A_i$'s: non-singleton 2-components of $X\cap \cE$;

$\hat A_i$'s: singleton 2-components of $X \cap \cE$;

$G_i=N(A_i)$, $\hat G_i=N(\hat A_i)$;

$A=\cup A_i$ and similarly for $\hat A$, $G$ and $\hat G$ (as in the passage preceding
Lemma~\ref{mp});

$\R=\Oh\sm (G \cup \hat G)$;

\mn

$P_i$'s: 2-components of $X \cap \R$ meeting $N^2(G \cup \hat G)$;

$\bar P_i$'s:
non-singleton 2-components of $X \cap\R$ not meeting $N^2(G \cup \hat G)$;

$\hat P_i$'s: singleton 2-components of $X \cap \R$ not in $N^2(G \cup \hat G)$;

$Q_i=N(P_i)$, $\bar Q_i=N(\bar P_i)$ and $\hat Q_i=N(\hat P_i)$;

$P=\cup P_i$, $Q=\cup Q_i$ etc.

\mn
(See figure \ref{fig1}.) Note that the vertices of $Q\cup \bar Q\cup \hat Q$, not being in $A \cup \hat A$,
are all good, while the template does not usually distinguish good and bad vertices of $G\cup \hat G$.
(The one exception to this is in the treatment of the special case \eqref{except}
in Section~\ref{Finally}.)
Note also that the
$G_i$'s and $\hat G_i$'s are pairwise disjoint and similarly for the $Q_i$'s, $\bar Q_i$'s and $\hat Q_i$'s.

\begin{figure}
\begin{center}

\begin{tikzpicture}[scale=0.9, main node/.style={circle,fill=blue!20,draw,minimum size=0cm,inner sep=0pt]}]
    \node[main node] (1) at (-0.5,0) {};
    \node[main node] (2) at (0.5,0)  {};
    \node[main node] (3) at (2.5,0) {};
    \node[main node] (4) at (3,0) {};
    \node[main node] (5) at (5,0) {};
    \node[main node] (6) at (5.5,0) {};
    \node[main node] (7) at (7.5,0) {};
    \node[main node] (8) at (8,0) {};
    \node[main node] (9) at (10,0)  {};
    \node[main node] (10) at (11.5,0) {};
    \node[main node] (11) at (13.5,0) {};
    \node[main node] (12) at (14.5,0) [label=right:$\quad \mathcal E$] {};

    \node[main node] (42) at (0.5,-.4)  {};
    \node[main node] (43) at (2.5,-.4) {};
    \node[main node] (44) at (3,-.4) {};
    \node[main node] (45) at (5,-.4) {};
    \node[main node] (46) at (5.5,-.4) {};
    \node[main node] (47) at (7.5,-.4) {};
    \node[main node] (48) at (8,-.4) {};
    \node[main node] (49) at (10,-.4)  {};
    \node[main node] (50) at (11.5,-.4) {};
    \node[main node] (51) at (13.5,-.4) {};

    \node at (1.5,.2) [label=below:$_{\bar Q}$] {};
    \node at (4,.2) [label=below:$_{\hat Q}$] {};
    \node at (6.5,.2) [label=below:$_{ Q}$] {};
    \node at (9,.2) [label=below:$_{A}$] {};
    \node at (12.5,.2) [label=below:$_{\hat A}$] {};

    \node[main node] (21) at (-0.5,1.5)  {};
    \node[main node] (22) at (1,1.5)  {};
    \node[main node] (23) at (2,1.5)  {};
    \node[main node] (24) at (3.5,1.5)  {};
    \node[main node] (25) at (4.5,1.5) {};
    \node[main node] (26) at (6,1.5) {};
    \node[main node] (27) at (7,1.5) {};
    \node[main node] (28) at (7.5,1.5) {};
    \node[main node] (29) at (10.5,1.5) {};
    \node[main node] (30) at (11, 1.5) {};
    \node[main node] (31) at (14, 1.5) {};
    \node[main node] (32) at (14.5, 1.5) [label=right:$\quad \mathcal O$] {};

    \node[main node] (62) at (1,1.9)  {};
    \node[main node] (63) at (2,1.9)  {};
    \node[main node] (64) at (3.5,1.9)  {};
    \node[main node] (65) at (4.5,1.9) {};
    \node[main node] (66) at (6,1.9) {};
    \node[main node] (67) at (7,1.9) {};
    \node[main node] (68) at (7.5,1.9) {};
    \node[main node] (69) at (10.5,1.9) {};
    \node[main node] (70) at (11, 1.9) {};
    \node[main node] (71) at (14, 1.9) {};

    \node at (1.5,2.1) [label=below:$_{\bar P}$] {};
    \node at (4,2.1) [label=below:$_{\hat P}$] {};
    \node at (6.5,2.1) [label=below:$_{ P}$] {};
    \node at (9,2.1) [label=below:$_{G}$] {};
    \node at (12.5,2.1) [label=below:$_{\hat G}$] {};

%%% dashed lines %%%%
    \node[circle,fill=black,draw,minimum size=0.1cm,inner sep=0pt] (100) at (8.5,1.5) {};
    \node[circle,fill=black,draw,minimum size=0.1cm,inner sep=0pt] (200) at (6,0) {};
    \node[circle,fill=black,draw,minimum size=0.1cm,inner sep=0pt] (101) at (12,1.5) {};
    \node[circle,fill=black,draw,minimum size=0.1cm,inner sep=0pt] (201) at (7,0) {};

    \draw[dashed] (100) -- (200) node [below] {};
    \draw[dashed] (101) -- (201) node [below] {};

    \draw (1) -- (2) node [below] {};
    \draw (2) -- (3) node [below] {};
    \draw (3) -- (4) node [below] {};
    \draw (4) -- (5) node [below] {};
    \draw (5) -- (6) node [below] {};
    \draw (6) -- (7) node [below] {};
    \draw (7) -- (8) node [below] {};
    \draw (8) -- (9) node [below] {};
    \draw (9) -- (10) node [below] {};
    \draw (10) -- (11) node [below] {};
    \draw (11) -- (12) node [below] {};
    \draw (21) -- (22) node [below] {};
    \draw (22) -- (23) node [below] {};
    \draw (23) -- (24) node [below] {};
    \draw (24) -- (25) node [below] {};
    \draw (25) -- (26) node [below] {};
    \draw (26) -- (27) node [below] {};
    \draw (27) -- (28) node [below] {};
    \draw (28) -- (29) node [below] {};
    \draw (29) -- (30) node [below] {};
    \draw (30) -- (31) node [below] {};
    \draw (31) -- (32) node [below] {};

    \draw (2) -- (22) node [below] {};
    \draw (3) -- (23) node [below] {};
    \draw (4) -- (24) node [below] {};
    \draw (5) -- (25) node [below] {};
    \draw (6) -- (26) node [below] {};
    \draw (7) -- (27) node [below] {};
    \draw (8) -- (28) node [below] {};
    \draw (9) -- (29) node [below] {};
    \draw (10) -- (30) node [below] {};
    \draw (11) -- (31) node [below] {};
    \draw (2) -- (1) node [below] {};
    \draw (2) -- (1) node [below] {};
    \draw (2) -- (1) node [below] {};
    \draw (2) -- (1) node [below] {};
    \draw (2) -- (1) node [below] {};
    \draw (2) -- (1) node [below] {};
    \draw (2) -- (1) node [below] {};
    \draw (2) -- (1) node [below] {};
    \draw (2) -- (1) node [below] {};
    \draw (2) -- (1) node [below] {};
    \draw (2) -- (1) node [below] {};

    \draw (2) -- (42) node [below] {};
    \draw (3) -- (43) node [below] {};
    \draw (4) -- (44) node [below] {};
    \draw (5) -- (45) node [below] {};
    \draw (6) -- (46) node [below] {};
    \draw (7) -- (47) node [below] {};
    \draw (8) -- (48) node [below] {};
    \draw (9) -- (49) node [below] {};
    \draw (10) -- (50) node [below] {};
    \draw (11) -- (51) node [below] {};

    \draw (22) -- (62) node [below] {};
    \draw (23) -- (63) node [below] {};
    \draw (24) -- (64) node [below] {};
    \draw (25) -- (65) node [below] {};
    \draw (26) -- (66) node [below] {};
    \draw (27) -- (67) node [below] {};
    \draw (28) -- (68) node [below] {};
    \draw (29) -- (69) node [below] {};
    \draw (30) -- (70) node [below] {};
    \draw (31) -- (71) node [below] {};

    \draw (42) -- (43) node [below] {};
    \draw (44) -- (45) node [below] {};
    \draw (46) -- (47) node [below] {};
    \draw (48) -- (49) node [below] {};
    \draw (50) -- (51) node [below] {};

    \draw (62) -- (63) node [below] {};
    \draw (64) -- (65) node [below] {};
    \draw (66) -- (67) node [below] {};
    \draw (68) -- (69) node [below] {};
    \draw (70) -- (71) node [below] {};   	    		    	
		
\end{tikzpicture}

\end{center}
\caption{} \label{fig1}
\end{figure}

\mn

Treatment of the contributions (to our overall cost) of the above pieces
will depend on their sizes, necessitating some further decomposition, as follows.
(Recall $a_i=|A_i|$ and so on.)  Say
\[
\mbox{$A_i$ is}\left\{\begin{array}{ll}
\mbox{\emph{small} $~$ if $g_i < \exp_2[\log^3d]$ and }\\
\mbox{\emph{large} $~~$ otherwise,}
\end{array}\right.
\]
and similarly for $\bar P_i$ and $P_i$.
Let $A^s,A^l$ be the unions of the small and large $A_i$'s (resp.)
and extend this notation in the natural ways; thus $G^s=N(A^s)$, $P^l$ is the
union of the large $P_i$'s and so on.
We also set $b=|B(A^l)|$ (see Section~\ref{SandG}).

\mn
\emph{Remark.}
The choice $\exp_2[\log^3d]$ is not delicate.  The most serious constraint
is in the discussion of \eqref{Plbd}, where we use $q^l =d^{\go(\log d)}$.
The other cutoffs could be smaller---we mainly need them to support application of
Lemmas~\ref{ag}-\ref{agbh}---but for simplicity we use one value for all.

\mn

Note that in proving Lemma~\ref{mp} we are given $g $ and $\hat g$.
Analysis in Sections~\ref{Combining} and \ref{Finally} will vary depending on these,
but for now the discussion is general.
It will be convenient to set $\bg=g+\hat g$.

Before proceeding, we set aside the easy (but important) special case in which
\beq{except}
\mbox{$a=\bar p=p=0~ $ and $~ \hat g\leq d^2/\log d.$}
\enq
This will be handled in Section~\ref{Finally},
and \emph{until then we restrict to $f$'s that are not of this type.}

\mn

It will be helpful to have specified the sizes of some of the other sets above,
namely
\[
\mbox{$a^s ~ (=|A^s|)$, $g^s$,
$p$, $q$, $\bar p$, $\bar q$, $\hat p$, $i_{A^s}$, $i_{\bar P^s}$, $i_{P^s}, b$,}
\]
\mn
which we may do at an (eventually negligible) cost of
\beq{params}
O(\log \bg).
\enq
(Most of these could be skipped, but it's easier to pay the above negligible cost
up front than to waste time on this issue.)

\mn

We begin with costs associated with the non-large sets above.
These choices are mostly treated as if made autonomously;
that is, without trying to exploit proximity or non-proximity of different pieces.
The one exception is in the cost of $P^s$,
where we sometimes
save substantially by
choosing initial vertices for the 2-components from $N^2(G\cup \hat G)$ rather than all of $\Oh$.

\begin{claim}\label{non-large}
The costs of identifying $\hat A$,$A^s$,$\hat P$,$\bar P^s$ and $P^s$ are bounded by:

\begin{itemize}
\setlength\itemsep{.5em}
\item[{\rm [$\hat A$]}]  $~~\log\Cc{N/2}{\hat a} \leq \hat g -\gO((\hat g/d))\log (\hat g/d))$
(using $\hat g =\hat a d$);

\item[{\rm [$A^s$]}]  $~~i_{A^s}(d-1) + O(a^s \log d)$;

\item[{\rm [$\hat P$]}]  $~~\log\Cc{N/2}{\hat p} \leq \hat p d = \hat q$;

\item[{\rm [$\bar P^s$]}]  $~~i_{\bar P^s}(d-1) + O(\bar p^s \log d$);

\item[{\rm [$P^s$]}]  $~~i_{P^s}\log (e\bg d^2/i_{P^s}) +O(p^s \log d) $.

\end{itemize}
\end{claim}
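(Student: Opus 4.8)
The plan is to treat each of the five pieces essentially on its own, and for each to bound the number of possibilities by a product of three factors: (i) the number of ways to record the (ordered) list of sizes of the relevant $2$-components, which is at most the number of compositions of the total size and so, by Proposition~\ref{compprop}, a factor $2^{O(\cdot)}$ that will be absorbed everywhere below; (ii) the number of ways to choose a ``root'' for each $2$-component from a suitable host set; and (iii), for each $2$-component, the number of ways to grow it from its root inside the ``square'' graph $Q_d^2$ (which has maximum degree less than $d^2$), which Lemma~\ref{treelemma} bounds by $(ed^2)^{(\text{component size})}$, contributing an overall $d^{O(\cdot)}$. Only step (ii) really distinguishes the cases; throughout, the sizes $a^s,\,i_{A^s},\,\bar p^s,\,i_{\bar P^s},\,p^s,\,i_{P^s},\,\hat a,\,\hat p$ may be treated as known, having been paid for in \eqref{params}.

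Two of the pieces, $\hat A$ and $\hat P$, are unions of singleton $2$-components, so there is nothing to grow: $\hat A$ is just some $\hat a$-subset of $\eee$ and $\hat P$ some $\hat p$-subset of $\Oh$, giving at most $\Cc{N/2}{\hat a}$ and $\Cc{N/2}{\hat p}$ possibilities. Since distinct singleton $2$-components have disjoint $d$-element neighborhoods, $\hat g=\hat a d$ and $\hat q=\hat p d$; using $\log N=d$ and $\Cc{n}{k}\le(en/k)^k$ then gives $\log\Cc{N/2}{\hat a}\le \hat a d-\hat a\log(2\hat a/e)=\hat g-(\hat g/d)\log(2\hat g/(ed))\le\hat g-\gO((\hat g/d)\log(\hat g/d))$, while crudely $\log\Cc{N/2}{\hat p}\le\hat p\log(N/2)=\hat p(d-1)<\hat p d=\hat q$.

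For $A^s$, $\bar P^s$ and $P^s$ I invoke Proposition~\ref{iXprop}. For $A^s\sub\eee$ take $Y=\eee$ (every $2$-component meets it trivially): the number of possibilities is at most $\Cc{N/2}{i_{A^s}}d^{O(a^s)}\le 2^{(d-1)i_{A^s}}d^{O(a^s)}$, i.e.\ cost $i_{A^s}(d-1)+O(a^s\log d)$; for $\bar P^s\sub\R\sub\Oh$ the same with $Y=\Oh$ gives $i_{\bar P^s}(d-1)+O(\bar p^s\log d)$. For $P^s$ the gain is that, by definition, each of its $2$-components meets $N^2(G\cup\hat G)$, so one may take $Y=N^2(G\cup\hat G)$: since the $G_i$'s and $\hat G_i$'s are pairwise disjoint, $|G\cup\hat G|=\bg$, hence $|N^2(G\cup\hat G)|\le d^2\bg$ by $d$-regularity, and Proposition~\ref{iXprop} bounds the count by $\Cc{d^2\bg}{i_{P^s}}d^{O(p^s)}\le(e\bg d^2/i_{P^s})^{i_{P^s}}d^{O(p^s)}$, i.e.\ cost $i_{P^s}\log(e\bg d^2/i_{P^s})+O(p^s\log d)$, as claimed.

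I do not expect a genuine obstacle here — this is bookkeeping — but two small points want care. First, one must check that the $2$-components of $A^s$ (resp.\ $\bar P^s$, $P^s$) really are the small $A_i$'s (resp.\ $\bar P_i$'s, $P_i$'s) and not coarser merges; this is immediate, since distinct $2$-components of $X_f\cap\eee$ (resp.\ of $X_f\cap\R$) stay distinct in any subset. Second, Proposition~\ref{iXprop} as stated asks for $b\le|Y|/2$, which holds comfortably for $A^s$ and $\bar P^s$ but is not obvious for $P^s$ (many singleton components could sit inside $N^2(G\cup\hat G)$); this is harmless, since $i_{P^s}\le|N^2(G\cup\hat G)|$ anyway (the components have disjoint neighborhoods, hence distinct anchors in $N^2(G\cup\hat G)$) and $\Cc{n}{k}\le(en/k)^k$ holds for all $1\le k\le n$, so one may simply replace the monotonicity step inside the proof of Proposition~\ref{iXprop} by this inequality. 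Finally, all the $2^{O(\cdot)}$ factors from Proposition~\ref{compprop} and the $(ed^2)^{(\cdot)}$ factors from Lemma~\ref{treelemma} are absorbed into the displayed $O(\cdot\log d)$ terms.
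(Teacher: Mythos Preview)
Your proof is correct and follows essentially the same approach as the paper's: the cases $\hat A$ and $\hat P$ are handled trivially as subsets of $\eee$ (resp.\ $\Oh$) of known size, and $A^s$, $\bar P^s$, $P^s$ are handled via Proposition~\ref{iXprop} with $Y$ equal to the full side for the first two and $Y=N^2(G\cup\hat G)$ for $P^s$. Your write-up is considerably more detailed than the paper's (which dispatches the claim in two sentences), and your explicit attention to the hypothesis $b\le|Y|/2$ in Proposition~\ref{iXprop} for the $P^s$ case is a point the paper simply glosses over.
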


\begin{proof}
The first and third of these are trivial and the others are instances of
Proposition~\ref{iXprop}, with some relaxation of bounds.
We use $Y=\Oh$ ($|Y|=N/2$) for all but $[P^s]$, where,
as mentioned above, we save significantly by
taking $Y=N^2(G\cup \hat G)$.\end{proof}

For larger pieces we have the following bounds, which, in contrast to the
elementary Claim~\ref{non-large}, depend on the sophisticated results
of Section~\ref{SandG}.

Lemmas~\ref{agbh} and \ref{ag}
bound the costs of $A^l$ and $\bar Q^l$ by
\beq{Alcost}
g^l - b-\gO(g^l/\log d)
\enq
and
\beq{barQlcost}
\bar q^l - \gO(\bar q^l/\log d).
\enq
(E.g.\ for \eqref{Alcost}:  we first pay $O(g^l\log d/d)$ for the list of $g_i$'s
and (with the obvious meaning) $b_i$'s
corresponding to large $A_i$'s (the cost bound given by Proposition~\ref{compprop}, using
$i_{A^l}< g^l/d$), and then apply Lemma~\ref{agbh} to the pieces,
absorbing the initial $O(g^l\log d/d)$ and the $2^d$ from the lemma
in the ``$\gO$" term of \eqref{Alcost}.)

\mn

For $P^l$, perhaps the most interesting part of this story,
the cost of full specification turns out to be more than we can afford,
and we retreat to the approximations of
Lemma~\ref{approx}.
(As mentioned earlier, Lemma~\ref{approx} was originally
a \emph{step} in the proof of Lemma~\ref{ag}; so its present appearance in a non-auxiliary role
seems interesting.)

Here again we pay an initial
\beq{pLis}
O(q^l\log d/d)
\enq
for $(q_i:i\in I)$, where $I$ indexes the large $P_i$'s.
Then for $i\in I$ we slightly modify the output of Lemma~\ref{approx}
(applied here with the roles of $\eee$ and $\Oh$ reversed),
letting $(S_i',F_i) =\vp_i(P_i)\in \W_i$,
with $\vp_i=\vp_{q_i}$ and $\W_i=\W(q_i)$ as in the lemma,
and setting $S_i=S_i'\sm (G\cup G')$ (see Figure 2).
Note $(S_i,F_i)$ still enjoys the properties
the lemma promised for $(S_i',F_i)$; that is,
\beq{Pi1}
S_i \supseteq [P_i],  ~~~F_i \subseteq Q_i,
\enq
\beq{Pi2}
d_{F_i}(u) \ge d-d/\log d ~~~ ~\forall u \in S_i.
\enq

\mn
(The only thing to observe here---used for the first part of \eqref{Pi1}---is
that $[P_i]\cap (G\cup \hat G)=\0$
follows from $N(P_i)\cap (A\cup \hat A)=\0$.
Incidentally, $S\supseteq P_i$ in \eqref{Pi1} would be enough for our purposes.)
Note also that the $S_i$'s are pairwise disjoint (by
\eqref{Pi2} since the second part of \eqref{Pi1} implies the $F_i$'s are pairwise disjoint)
and that, with $S=\cup S_i$, $F=\cup F_i$,
\beq{SandF}
S\cap (G\cup \hat G)= \0 = F\cap (\bar Q\cup \hat Q\cup Q^s).
\enq

\begin{figure}
\begin{center}

\begin{tikzpicture}[scale=0.9, main node/.style={circle,fill=blue!20,draw,minimum size=0cm,inner sep=0pt]}]

%%%% even part %%%%
    \node[main node] (1) at (-0.5,0) {};
    \node[main node] (2) at (0.5,0)  {};
    \node[main node] (3) at (2.5,0) {};
    \node[main node] (4) at (3.5,0) {};    
    \node[main node] (5) at (5.5,0) {};
    \node[main node] (6) at (7,0) {};

    \node[main node] (12) at (6.7,0) [label=right:$\quad \mathcal E$] {};

%%%% even part sets %%%%    
    \node[main node] (42) at (0.5,-.4)  {};
    \node[main node] (43) at (2.5,-.4) {};
    \node[main node] (44) at (3.5,-.4) {};    
    \node[main node] (45) at (5.5,-.4) {};

%%%% even part labeling %%%%
    
    \node at (1.5,-.2) [label=below:$_{Q^l}$] {};
    \node at (4.5,-.2) [label=below:$_{A \cup \hat A}$] {};

%%%% odd part %%%%  
    \node[main node] (21) at (-0.5,1.5)  {};
    \node[main node] (22) at (1,1.5)  {};
    \node[main node] (23) at (2,1.5)  {};
    \node[main node] (24) at (3,1.5)  {};    
    \node[main node] (25) at (6,1.5) {};
    \node[main node] (26) at (7,1.5) {};

    \node[main node] (32) at (6.7, 1.5) [label=right:$\quad \mathcal O$] {};

%%%% odd part sets %%%%    

    \node[main node] (62) at (1,1.9)  {};
    \node[main node] (63) at (2,1.9)  {};
    \node[main node] (64) at (3,1.9)  {};    
    \node[main node] (65) at (6,1.9) {};

%%%% odd part labeling %%%%
    \node at (1.5,2.1) [label=below:$_{P^l}$] {};
    \node at (4.5,2.1) [label=below:$_{G \cup \hat G}$] {};

%%%% adjacent vertices %%%%    
    \node[circle,fill=black,draw,minimum size=0.1cm,inner sep=0pt] (100) at (3.8,1.5) {};
    \node[circle,fill=black,draw,minimum size=0.1cm,inner sep=0pt] (200) at (1.8,0) {};
    
    \draw[dashed] (100) -- (200) node [below] {};

    \draw (1) -- (2) node [below] {};
    \draw (2) -- (3) node [below] {};
    \draw (3) -- (4) node [below] {};
    \draw (4) -- (5) node [below] {};
    \draw (5) -- (6) node [below] {};

    \draw (21) -- (22) node [below] {};
    \draw (22) -- (23) node [below] {};
    \draw (23) -- (24) node [below] {};
    \draw (24) -- (25) node [below] {};
    \draw (25) -- (26) node [below] {};

    \draw (2) -- (22) node [below] {};
    \draw (3) -- (23) node [below] {};
    \draw (4) -- (24) node [below] {};
    \draw (5) -- (25) node [below] {};

    \draw (2) -- (42) node [below] {};
    \draw (3) -- (43) node [below] {};
    \draw (4) -- (44) node [below] {};
    \draw (5) -- (45) node [below] {};

    \draw (22) -- (62) node [below] {};
    \draw (23) -- (63) node [below] {};
    \draw (24) -- (64) node [below] {};
    \draw (25) -- (65) node [below] {};

    \draw (42) -- (43) node [below] {};
    \draw (44) -- (45) node [below] {};

    \draw (62) -- (63) node [below] {};
    \draw (64) -- (65) node [below] {};

% Start S and F %%%%%%%%%%%%%%%%%%%%%%

	\node[main node] (snw) at (0.8,2.1) {};
	\node[main node] (sne) at (2.2,2.1) {};
	\node[main node] (ssw) at (0.8,1.3) {};
	\node[main node] (sse) at (2.2,1.3) {};
	
	\node[main node] (fsw) at (0.8,-0.2) {};
	\node[main node] (fse) at (2.3,-0.2) {};
	\node[main node] (fnw) at (0.8,.2) {};
	\node[main node] (fne) at (2.3,.2) {};
	
	\draw[dashed] (snw) -- (ssw) node [below] {};
	\draw[dashed] (ssw) -- (sse) node [below] {};
	\draw[dashed] (sse) -- (sne) node [below] {};
	\draw[dashed] (snw) -- (sne) node [below] {};
	
	\draw[dashed] (fnw) -- (fsw) node [below] {};
	\draw[dashed] (fnw) -- (fne) node [below] {};
	\draw[dashed] (fse) -- (fne) node [below] {};
	\draw[dashed] (fse) -- (fsw) node [below] {};
	
	\node at (1.5,1.5) [label=below:$_S$] {};
	\node at (1.5,0) [label=above:$_F$] {};

% End S and F %%%%%%%%%%%%%%%%%%%%%%

\end{tikzpicture}
\end{center}
\caption{$S$ and $F$} \label{fig2}
\end{figure}

Lemma~\ref{approx} bounds the cost of specifying the $(S_i,F_i)$'s by
\beq{SFcost}
O(q^l\log^2d/d),
\enq
which absorbs the decomposition cost \eqref{pLis}.

\bn

This completes the template stage (apart from the treatment of \eqref{except}).
Formally---but we won't actually use this---we could say that
$\T=\T_f$
is $(A,\hat A,\bar P,\hat P,P^s, S,F)$.
(Note $A$ determines $A^s, A^l$ and similarly for $\bar P$.)

\subsection{Colors}\label{Colors}

First notice that each
$\bar P_i$, $\bar Q_i$, $\hat P_i$, $\hat Q_i$, $P^s_i$, $Q_i^s$ (these being, of course, the 2-components of $P^s$ and $Q^s$)
and $F_i$ is monochromatic.
(The general observation is:
if $Z$ is a 2-linked subset of $\Oh$ or $\eee$,
all vertices of $Z$ are bad and all vertices of $N(Z)$ are good,
then each of $Z$, $N(Z)$ is monochromatic (and the color for $Z$ determines the color for $N(Z)$).)

So we begin by paying
\beq{ppp}
i_P +i_{\bar P} +\hat p
\enq
to specify the colors of these sets.
(These are the ``cheap" color choices mentioned earlier.)
\emph{We then restrict our discussion to the set $\g$ of
$f$'s agreeing with these specifications} (and the specified $\T$).

\mn

For appraising the cost of identifying a member of $\g$,
we refine the discussion leading to \eqref{logf*}.
To begin, we will in each instance consider $T(u)$ (defined in \eqref{def_tu})
only for the $u$'s in
some subset, say $\U$, of $\Oh$, with the rest of $\ga$ (as in Lemma~\ref{lem:Sh};
\emph{cf.}\ \eqref{alphaS})
supported on singletons.
(We use $u$ and $v$ for vertices of $\Oh$ and $\eee$ respectively.)
Thus we use
\beq{H(f)}
H(f) ~\leq ~\sum_{u\in \U}T(u) +\sum_{u\in \Oh\sm\U}H(f_u|f(N_u))
+\sum_{v\in \eee}(1-d_\U(v)/d)H(f_v).
\enq
\nin
As noted earlier, $\T$ includes specification of $X_f\cap \eee$,
so we know which vertices of $\eee$ are bad for $f$.
A key ingredient in evaluating the first term in
\eqref{H(f)}
is then the following variant of \eqref{Tu},
in which---just to point out that this doesn't require uniform
distribution---$T_\mu(u)$ is the natural generalization of $T_\g(u)$
to the probability distribution $\mu$.
\begin{prop}\label{prop:tu}
If $X\cup Y$ is a partition of $N_u$ with $X,Y\neq \0$, and $f$ is chosen
from some probability distribution $\mu$ on the set of colorings for which $X$ is
entirely good and $Y$ entirely bad, then
\beq{Tuagain}
T_\mu(u) \leq 1+O(1/d).
\enq
\end{prop}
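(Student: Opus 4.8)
The plan is to expand $T_\mu(u)$ using the second line of \eqref{def_tu} and bound the three resulting pieces, the entire gain over \eqref{Tu} coming from the fact that the hypothesis $X,Y\neq\0$ pins the number of colors appearing on $N_u$ into $\{2,3\}$. First I would dispose of $\tfrac1d H(f(N_u))$: the color set $f(N_u)$ ranges over subsets of $\{1,2,3,4\}$, so this is $O(1/d)$. It then suffices, by \eqref{condent}, to prove the pointwise bound $\tfrac1d H(f_{N_u}|f(N_u)=c)+H(f_u|f(N_u)=c)\le1$ for every $c$ with $\pr(f(N_u)=c)>0$.

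Fix such a $c$. Since the main phase of members of $\f^*$ is $(\{1,2\},\{3,4\})$, under $\mu$ every $v\in X$ has $f_v\in\{1,2\}$ and every $v\in Y$ has $f_v\in\{3,4\}$; with $X,Y\neq\0$ this gives $c\cap\{1,2\}\neq\0\neq c\cap\{3,4\}$, so $|c|\ge2$, while properness (we need $f_u\notin c$) forces $4-|c|\ge1$, i.e.\ $|c|\le3$. So $|c|\in\{2,3\}$, and in particular $\min(|c\cap\{1,2\}|,|c\cap\{3,4\}|)=1$. Bounding conditional entropies by log-range-sizes (Lemma~\ref{entropyprop}(a)) and using $|X|+|Y|=d$,
\[
H(f_u|f(N_u)=c)\le\log(4-|c|),\qquad H(f_{N_u}|f(N_u)=c)\le|X|\log|c\cap\{1,2\}|+|Y|\log|c\cap\{3,4\}|\le\max(|X|,|Y|)\le d.
\]
If $|c|=2$ then both intersections are singletons, so the second bound is $0$ and the first is $\log2=1$; if $|c|=3$ the first bound is $\log1=0$ and the second, divided by $d$, is at most $1$. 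Either way the pointwise bound holds, and averaging over $c$ and re-adding the $O(1/d)$ gives \eqref{Tuagain}.

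I don't anticipate a genuine obstacle here: the content is the observation that forcing both $X$ and $Y$ nonempty squeezes $|c|$ into exactly the range $\{2,3\}$ in which, for each $c$, one of $H(f_{N_u}|f(N_u)=c)$ and $H(f_u|f(N_u)=c)$ vanishes---this is precisely what removes the factor of two present in \eqref{Tu}. The only point to watch is that $\mu$ is an arbitrary distribution, so the argument must never invoke uniformity; since everything above is done by conditioning on $f(N_u)=c$, it does not.
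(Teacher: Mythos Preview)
Your proof is correct and follows essentially the same approach as the paper's: both reduce to the pointwise bound on $\tfrac{1}{d}H(f_{N_u}\mid f(N_u)=c)+H(f_u\mid f(N_u)=c)$, observe that the good/bad hypothesis forces $|c|\in\{2,3\}$, and then note that in each case one of the two conditional entropies vanishes while the other is at most~$1$. Your version is slightly more explicit in tracking $c\cap\{1,2\}$ and $c\cap\{3,4\}$ separately (the paper simply says ``$f(N_u)$ determines $f_{N_u}$'' when $|c|=2$), but the content is the same.
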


\begin{proof}

This is similar to the derivation of \eqref{Tu}.
Notice that $|f(N_u)|$ must be either 2 or 3
(it is at least 2 by our assumption on $X,Y$ and at most 3 since $f(N_u)\not\ni f_u$),
and that
\[
\mbox{$f(N_u)$ determines} \left\{\begin{array}{ll}
f_{N_u}
&\mbox{if $|f(N_u)| =2$,}\\
f_u &\mbox{if $|f(N_u)| =3$,}
\end{array}\right.
\]
so that
$H(f_{N_u} | f(N_u) = c) = 0 $ if $|c|=2$ and
$H(f_u | f(N_u) = c) = 0 $ if $|c|=3$.
Moreover,
\begin{eqnarray*}
H(f_u | f(N_u) = c) \leq 1 & \mbox{if $|c|=2$,}\\
H(f_{N_u} | f(N_u) = c) \leq d & \mbox{if $|c|=3$}
\end{eqnarray*}
(the $d$ could be replaced by $\max\{|X|,|Y|\}$).

Thus
$
\tfrac{1}{d}H(f_{N_u}|f(N_u))+H(f_u|f(N_u))
$
(the main part of \eqref{def_tu}) is
\[
\mbox{$\sum_c\pr(f(N_u)=c)[\tfrac{1}{d}H(f_{N_u}|f(N_u)=c)+H(f_u|f(N_u)=c)]\leq 1$,}
\]
and Proposition~\ref{prop:tu} follows since
$H(f(N_u))= O(1)$.
\end{proof}
\mn

Of course knowing $X_f\cap \eee$ also bounds the last sum in \eqref{H(f)}
by
\[
\sum_{v\in \eee}(1-d_\U(v)/d) =N/2-|\U|,
\]
which in cases where $\g$ specifies some of the $f_v$'s,
say those in $\V\sub \eee$, improves to
\beq{N2U}
N/2-|\U| -\sum_{v\in \V}(1-d_\U(v)/d)
= N/2-|\U| -|\nabla(\V,\Oh\sm \U)|/d.
\enq

So we will be evaluating \eqref{H(f)} using
\eqref{Tuagain} and \eqref{N2U} (with a small assist from \eqref{Tu}).
From this point we take
\[
\mbox{$\U=G\cup \hat G~$ and $~\V=\bar Q\cup \hat Q\cup Q^s\cup F$}
\]
(so also $\Oh\sm \U=\R$; recall colors for $\V$ were specified at \eqref{ppp}).
We then have the following bounds for the three sums in \eqref{H(f)}.

\mn

The combination of \eqref{Tu} and \eqref{Tuagain} bounds the first by
\beq{C1}
g +b + \hat g +O(\bg/d)
\enq
(using \eqref{Tu} for $u\in B$ and \eqref{Tuagain} for the rest).

\mn

We next claim that the second is at most
\beq{C2}
s\log 3 + N/2-(g+\hat g+s) = N/2 - (g+\hat g ) + s (\log 3 -1)
\enq
(where $s=|S|$).
Here we use $S\cap (G\cup \hat G)=\0$ (see \eqref{SandF}) and
\[
H(f_u|f(N_u))\leq H(f_u) \leq
\left\{\begin{array}{ll}
1&\mbox{if $u\in \Oh\sm (G\cup \hat G\cup S)$,}\\
\log 3&\mbox{if $u\in S$.}
\end{array}\right.
\]
The second bound is trivial.  For the first notice that we actually \emph{know}
$f_u$ if $u\in \bar P\cup \hat P\cup P^s$
and in other cases know $u$ is good (using $P^l\sub S$).

\mn

Finally,
the last term in \eqref{H(f)} is at most
$N/2-|\U| -|\nabla(\V,\R)|/d $
(see \eqref{N2U}), which we rewrite as
\beq{C3}
N/2 -(g+\hat g) - [\bar q+\hat q+|\nabla(Q^s,\R)|/d+|\nabla(F,\R)|/d]
\enq
(using $N(\bar Q\cup \hat Q)\sub \R$ and
$F\cap (\bar Q\cup \hat Q\cup Q^s)=\0$ (see \eqref{SandF})).

\subsection{In sum}\label{Combining}

It remains to check that the above cost bounds give Lemma~\ref{mp}
(in cases not covered by \eqref{except}).
We are now playing the game mentioned near the end of Section~\ref{Preview},
in which we try to balance costs from the template stage against what we have gained
(relative to \eqref{logf*}) in the coloring stage (and need to come out slightly ahead).

\mn

The bounds are:  from the template stage, \eqref{params}
and the more serious bounds in
Claim~\ref{non-large}, \eqref{Alcost}, \eqref{barQlcost} and \eqref{SFcost};
and from the coloring stage, the minor \eqref{ppp}
and the non-minor \eqref{C1}-\eqref{C3}.
We will recall the template bounds as we come to them.
The total cost from the coloring stage is bounded by
\beq{colorsummary}
N+ b - (g+\hat g) + s(\log 3-1) -(\bar q+\hat q) -|\nabla(Q^s\cup F,\R)|/d +O(\bg/d),
\enq
gotten by summing \eqref{C1}-\eqref{C3}
and absorbing \eqref{ppp} in the $O(\bg/d)$.

\bn

Note that both the $O(\bg/d)$ in \eqref{colorsummary}
and the $O(\log \bg)$ in \eqref{params} are
negligible relative to the bounds in Lemma~\ref{mp}.
(The comparison is least drastic when $g=0$ and $\hat g$ is not much more than $d^2/\log d$.)
So we may safely ignore these terms
and in particular, rearranging and slightly expanding, replace \eqref{colorsummary} by
\beq{CS}
N-\hat g-\hat q -g^s -g^l +b-\bar q^s -\bar q^l+s(\log 3-1)-|\nabla(Q^s\cup F,\R)|/d.
\enq
The initial $N$ will of course cancel the $2^{-N}$ in Lemma~\ref{mp},
and we want to show that the combination of the remaining terms in \eqref{CS}
and the template costs produces the savings the lemma promises.
We consider terms in groups of two or three corresponding to the different
constituents of the template, following the order in \eqref{CS},
with the expressions in curly brackets below
representing template costs and those immediately following them
taken from \eqref{CS} (and the right hand sides the bounds we will use).
We first collect all these bounds and then take stock.

\mn
\underline{$\hat A$ terms}:
$~~~
\{\hat g -\gO((\hat g/d))\log (\hat g/d))\} -\hat g =-\gO((\hat g/d))\log (\hat g/d))$

\mn
\underline{$\hat P$ terms}:
$~~~
\{\hat q\} - \hat q =0$

\mn
\underline{$A^s$ terms}:
$~~~\{i_{A^s}(d-1) + O(a^s \log d)\} - g^s \leq -(1/2-o(1))g^s$

\mn
(since $g^s\ge \max \{2i_{A^s}(d-1),\gO(a^sd/\log^2d)\}$, the second bound by Lemma~\ref{1overd})

\mn
\underline{$A^l$ terms}:
$~~~
\{g^l - b-\gO(g^l/\log d)\} -g^l+b = -\gO(g^l/\log d)$

\mn
\emph{Remark.}
Using the last two bounds, we could replace the second bound in
Lemma~\ref{mp} by
$\exp[-\gO(g^s + g^l/\log d + (\hat g/d)\log(\hat g/d))]$
and the bound in \eqref{calc} by $2^{-\gO(d)}$.

\mn
\underline{$\bar P^s$ terms}:
$~~~
\{i_{\bar P^s}(d-1) + O(\bar p^s \log d)\}-\bar q^s \leq -(1/2-o(1))\bar q^s$

\mn
(as for the $A^s$ terms).

\mn
\underline{$\bar P^l$ terms}:
$~~~
\{\bar q^l - \gO(\bar q^l/\log d)\}-\bar q^l \leq 0$

\mn

The $P$ terms require a little more care.  Here we will sometimes incur a small
loss---that is, a positive contribution---but can live with this provided these
losses are negligible relative to
\beq{sillybg}
\bg\min\{d^{-1}\log (\bg/d),(\log d)^{-1}\}
\enq
since our current gain from $\hat A$, $A^s$ and $A^l$ is at least of this order.
Recall from \eqref{C3} that the last term in
\eqref{CS} is the same as
$[|\nabla(Q^s,\R)|+ |\nabla(F,\R)|]/d$.

\mn
\underline{$P^s$ terms}:
\beq{Psterms}
\{i_{P^s}\log (e\bg d^2/i_{P^s}) +O(p^s \log d)\} -|\nabla(Q^s,\R)|/d .
\enq
Lemma~\ref{1overd} and the definition of ``small" give
\beq{Qsiso}
q^s >(1-o(1))p^sd/\log^2d.
\enq

\mn
Set $k=\log_d q^s$ and suppose first that $k=d^{o(1)}$.  Then
Lemma~\ref{1overd} gives
$|N(Q^s)| =\gO(q^sd/k)$, implying that either
\beq{Qs1}
q^s=O(\bg k/d)
\enq
or
\beq{Qs2}
|\nabla(Q^s,\R)|\geq |N(Q^s)|-|\U| =\gO(q^sd/k).
\enq
But if \eqref{Qs1} holds then $i_{P^s}\leq q^s/d$ and \eqref{Qsiso}
imply that the positive terms in \eqref{Psterms} are negligible relative to
\eqref{sillybg}. (Note this uses the fact that $x \log (A/x)$ is increasing on $(0, A/e]$.) If, on the other hand, \eqref{Qs1} does not hold then by \eqref{Qs2} those positive terms are dominated by the negative term.

If $k$ is larger, then $\bg\geq q^s$ implies that the first and second terms in
\eqref{Psterms} are (respectively) $O((\bg/d)\log d)$ and (again using \eqref{Qsiso})
$O((\bg/d)\log^3d)$, both of which are dwarfed by the expression in \eqref{sillybg}.

\mn
\underline{$P^l$ terms}:
\beq{Plbd}
\{O(q^l\log^2d/d)\} + s(\log 3-1) -|\nabla(F,\R)|/d
\enq

\mn
Assuming $q^l\neq 0$, we have $\bg\ge q^l\ge \exp_2[\log ^3 d]$,
so the first term in
\eqref{Plbd} is negligible relative to \eqref{sillybg}.
On the other hand, \eqref{Pi2}
and $S\cap \U=\0$ (see \eqref{SandF}) give
\[
|\nabla(F,\R)|/d\ge |\nabla(F,S)|/d\ge (1-1/\log d)s,
\]
so the sum of the last two terms in \eqref{Plbd} is at most
$-(2-\log 3-1/\log d)s$.

\mn
\emph{Summary.}
In the second case of Lemma~\ref{mp}, the above gains from $A$ and $\hat A$ give the
promised bound (or the stronger
$-\gO(g^s + g^l/\log d + (\hat g/d)\log(\hat g/d))$ mentioned earlier).

If we are in the first case, the desired gain comes from $\bar P^s$ and/or $P^s$
(at least one of which must be nonempty since we assume \eqref{except} does not hold;
note $\bg\le d^2/\log d$ implies $\bar P^l=P^l=\0$).
If $\bar P^s\neq\0$ then the gain is at least $(1/2-o(1))\bar q^s =\gO(d)$.
If $P^s\neq \0$, then we note that \eqref{Qs1} is impossible, since $\bg \le d^2/\log d$ and $q^s>d$;
so \eqref{Qs2} holds and we gain $\gO(q^sd/k)=\gO(d)$.

\subsection{Finally}\label{Finally}
We return to the exceptional case \eqref{except}, which we recall:
\beq{except'}
\mbox{$a=\bar p=p=0~ $ and $~ \hat g\leq d^2/\log d.$}
\enq
Notice that if the first part of this holds then we must have
\[
X\cap \hat G\neq \0
\]
(where $X=X_f$), since otherwise $f$ is not ideal (so is not in $\f^*$).
So it is enough to show that for each $x\in [1,\hat g]$,
the number of possibilities
for $f\in \f^*$ satisfying \eqref{except'} and $|X_f\cap \hat G|=x$
is (suitably) small.

To begin (given $x$) we pay
\beq{except1}
\mbox{$\log\C{N/2}{\hat a} +\log (\hat g/d) +\log\C{N/2}{\hat p} + \log\C{\hat g}{x}
< \hat g+\hat q + O(x\log d)$}
\enq
for $\hat A$, $\hat p$, $\hat P$ and $X\cap \hat G$.
We then
assign colors to $\hat A\cup (X\cap \hat G)\cup \hat P$,
noting that these determine the restriction of $f$ to $\hat G\cup N(X\cap \hat G)\cup \hat Q$
(since $u\in \hat G\sm X$ is colored by whichever of $3,4$ is not assigned to its neighbor
in $\hat A$, and similarly for $v\in (N(X\cap \hat G)\sm \hat A)\cup \hat Q$).
Thus, since vertices whose colors are not determined by these choices are good,
the total coloring cost is at most
\[
\hat a +x+\hat p +N-[\hat g +|N(X\cap \hat G)| + \hat q]
= N-[\hat g+\hat q +\gO(xd)].
\]
(For the r.h.s.\ note that $N(X\cap \hat G)\cap \hat Q=\0$ (by the definition of $\hat P$)
and that the bound on $\hat g$ in \eqref{except'}
implies $|N(X\cap \hat G)|=\gO(xd)$ (by Lemma~\ref{1overd}) and ($\hat p\leq$) $\hat a\leq d/\log d$.)
Finally, combining with \eqref{except1} and summing
bounds the number of $f$'s satisfying \eqref{except'} by
\[
\mbox{$\sum_{x\geq 1}2^{N-\gO(xd)} =2^{N-\gO(d)}.$}
\]

\section{More colors}\label{More}

As mentioned in Section~\ref{Intro}, the conjecture of Engbers and Galvin 
applies to a general (fixed) $q$, but for 
$q>4$ predicts less than the actual asymptotics of $C_q(Q_d)$.  Here we just want to observe that for $q\in \{5,6\}$, one may
again \emph{hope} for something like Theorem~\ref{mainthm}.  We first recall the original conjecture:
\begin{conjecture}\label{EGConj}
[\cite{EG}, Conj.\ 6.2]  For each fixed q,
\beq{EGC}
\mbox{$C_q(Q_d) = (1+{\bf 1}_{\{q ~ {\rm odd}\}})
\C{q}{\lfloor q/2\rfloor}(\lfloor q/2\rfloor \lceil q/2\rceil)^{N/2}
\exp[(1+o(1))f(q)]$}
\enq
as $d\ra\infty$, where 
\[
\mbox{$f(q) =\frac{\lceil q/2\rceil}{2\lfloor q/2\rfloor }\left(2-\frac{2}{\lceil q/2\rceil}\right)^d 
+ 
\frac{\lfloor q/2\rfloor}{2\lceil q/2\rceil}\left(2-\frac{2}{\lfloor q/2\rfloor}\right)^d .$}
\]
\end{conjecture}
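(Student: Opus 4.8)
The plan is to split on $q$. The cases $q\le4$ are already settled: $q\le2$ trivially, $q=3$ by Theorem~\ref{thmG}, and $q=4$ by Theorem~\ref{mainthm}; in each case one only has to unwind the right-hand side of \eqref{EGC} --- e.g.\ for $q=4$, $\lfloor q/2\rfloor=\lceil q/2\rceil=2$ gives $f(4)=\tfrac12\cdot1^d+\tfrac12\cdot1^d=1$ and $(1+{\bf 1}_{\{q~{\rm odd}\}})\Cc{4}{2}(2\cdot2)^{N/2}=6\cdot2^N$, so \eqref{EGC} reads $6\cdot2^Ne^{1+o(1)}\sim6e2^N$, and the $q=3$ case is analogous. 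So the content is $q\ge5$, which I would attack by transplanting the architecture of the present paper, keeping in mind that now the defect set $X_f$ is typically of size exponential in $d$ (roughly $f(q)$ bad vertices on each side), so the $o(N)$-scale error control of Sections~\ref{Templates}--\ref{Combining} must be sharpened to $o(f(q))$-scale. \emph{Step~1} is a main-phase theorem for general $q$: for all but a $2^{-\gO(d)}$ fraction of $q$-colourings $f$ there is an ordered equipartition $(C,D)$ of $[q]$ into parts of sizes $\lfloor q/2\rfloor,\lceil q/2\rceil$ with which $f$ agrees off a set $X_f$ of density $o(1)$ whose $2$-component structure is isoperimetrically tame in the sense needed to feed Lemmas~\ref{ag}--\ref{1overd}. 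Bare existence of a main phase is Theorem~1.1 of \cite{EG}; the extra work is to couple it with the isoperimetric input of Lemma~\ref{1overd} so that the $2$-components of $X_f$ may be assumed to expand, and --- unlike for $q\le4$ --- to phrase everything in terms of the component sizes of $X_f$ rather than $|X_f|$, since now $|X_f|$ can approach $2^d$.

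\emph{Step~2} runs the template-plus-colouring count of Sections~\ref{Templates}--\ref{Colors} with the main phase fixed, say $(C,D)$ with $\mathcal E$ receiving colours from $C$. As there, one records a template pinning down $X_f\cap\mathcal E$ --- Proposition~\ref{iXprop} for its small $2$-components, Lemmas~\ref{ag} and \ref{agbh} for the large ones --- and uses the Sapozhenko--Galvin approximation Lemma~\ref{approx} to cheaply describe the part of $X_f\cap\mathcal O$ far from $X_f\cap\mathcal E$; one then prices the colouring given the template by the Shearer/entropy inequality $H(f)\le\sum_uT(u)+\cdots$, with Proposition~\ref{prop:tu} supplying the crucial ``$T(u)\le1+O(1/d)$'' at bad neighbourhoods. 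The target is: for a fixed main phase, the number of colourings whose defect set has a prescribed profile (number of bad vertices on each side, together with the sizes of their neighbourhoods) is at most $(\lfloor q/2\rfloor\lceil q/2\rceil)^{N/2}$ times a weight that, summed over all profiles, is $e^{(1+o(1))f(q)}$.

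\emph{Step~3} is the matching lower bound and the assembly. The lower bound is the flaw count sketched in the Introduction, generalised: for each of the $(1+{\bf 1}_{\{q~{\rm odd}\}})\Cc{q}{\lfloor q/2\rfloor}$ ordered equipartitions and each set of $k$ flaws at pairwise distance $\ge3$, the number of colourings is $\sim(\lfloor q/2\rfloor\lceil q/2\rceil)^{N/2}$ times a product of per-flaw factors; a flaw at $v\in\mathcal E$ (a colour of $D$ placed on $v$, forcing the $d$ neighbours into $D$ minus that colour) contributes $\tfrac{1}{\lfloor q/2\rfloor}(1-1/\lceil q/2\rceil)^d$, so summing over the $\lceil q/2\rceil$ colours and the $N/2$ choices of $v$ gives exactly the first term of $f(q)$, and likewise on $\mathcal O$ for the second term. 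A Poisson / inclusion--exclusion computation turns $\sum_k$ into $e^{(1+o(1))f(q)}$, and summing over the (pairwise incompatible, hence essentially non-overlapping) main phases yields \eqref{EGC} --- exactly as the $6e2^N$ lower bound and the ``ideal'' upper bound bracket Theorem~\ref{mainthm}.

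The hard part is Step~2 for $q\ge5$. There $X_f$ is genuinely large --- its $2$-components can be of size exponential in $d$ --- so one is outside the small-defect regime in which Lemmas~\ref{ag}--\ref{1overd} are sharp, and the template/entropy bookkeeping of Section~\ref{Combining} must be reorganised so that the aggregate cost of describing and recolouring $X_f$ is pinned to within an additive $o(f(q))$, not merely $o(N)$. The real danger is clusterings of flaws: a cluster of $t$ mutually close bad vertices is cheap to locate (small neighbourhood) but could multiply the colouring count, and showing that such configurations never produce a net gain --- i.e.\ that the non-ideal colourings still contribute only $e^{o(f(q))}$ --- is the crux. The cases $q\in\{5,6\}$, where the growth rate $(2-2/\lceil q/2\rceil)^d$ is the smallest possible among $q\ge5$, look the most tractable, and I would settle those first (cf.\ Conjecture~\ref{34Conj}).
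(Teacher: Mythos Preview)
The statement you are addressing is a \emph{conjecture}, not a theorem: the paper does not prove it, and for $q\ge 5$ it remains open. What the paper does prove is the case $q=4$ (Theorem~\ref{mainthm}); the cases $q\le 3$ were known earlier. So there is no ``paper's own proof'' to compare against here, and your write-up is not a proof but a research outline.

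You are candid about this yourself: you say ``the hard part is Step~2 for $q\ge 5$'' and that ``showing that such configurations never produce a net gain \ldots\ is the crux,'' without supplying an argument. That is the genuine gap. Concretely, for $q\ge 5$ the typical defect set $X_f$ has size of order $f(q)$, which is exponential in $d$, so the isoperimetric Lemma~\ref{1overd} (which requires $|A|=d^{d^{o(1)}}$) and the Sapozhenko/Galvin Lemmas~\ref{ag}--\ref{agbh} (which require $g\le \gamma^d$ with $\gamma<2$, and whose savings are only $\gO(g/\log d)$) are no longer sharp enough to pin the count to within a multiplicative $e^{o(f(q))}$ as you need. The entropy bound in Proposition~\ref{prop:tu}, which gives $T(u)\le 1+O(1/d)$, likewise only yields savings of order $|G|/d$ per bad neighbourhood, and summing such $O(1/d)$ errors over $\Theta(f(q))$ vertices already swamps the target precision. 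None of the tools in the paper control non-ideal colourings to the accuracy required when $q\ge 5$; the paper itself flags this by stating Conjecture~\ref{34Conj} separately for $q\in\{5,6\}$ and remarking that for larger $q$ even the sign of the $o(1)$ is unclear.

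In short: your cases $q\le 4$ are fine (and are exactly what the paper and its predecessors establish), but for $q\ge 5$ you have described where the difficulty lies rather than overcome it.
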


\nin
(In particular when $q\in \{3,4\}$, $f(q)=1$ and \eqref{EGC} becomes Theorems~\ref{thmG}
and \ref{mainthm}.)
The first two factors on the r.h.s.\ of \eqref{EGC} correspond to a choice of main phase 
and the third to "pure" colorings (those without flaws) with a given main phase.

The final exponential corresponds to the "isolated" flaws---those at distance at least three from other
flaws---admissible in ideal colorings.  For simplicity we say this just for even $q$
(where $f(q)=(2-4/q)^d$).
Here, for a given main phase the number of ideal colorings with exactly $k$ flaws is at most
\[
\mbox{$\C{N}{k}\left(\frac{q}{2}\right)^{N-dk}\left(\frac{q}{2}-1\right)^{dk} 
< \left(\frac{q}{2}\right)^N\frac{1}{k!}\left[2^d\left(1-\frac{2}{q}\right)^d\right]^k 
=\left(\frac{q}{2}\right)^Nf(q)^k/k!$}
\]
Thus the r.h.s.\ of \eqref{EGC}---even without the "$o(1)$"---is an upper bound on the number of
ideal colorings, and Conjecture~\ref{EGConj} says that, for any (fixed) $q$, this value is not
\emph{so} far from the \emph{overall} number of colorings.

When $q\leq 6$ (that is, when $f(q)\ll \sqrt{N}$), the r.h.s.\ of \eqref{EGC}
without the $o(1)$ is asymptotic to the number of ideal colorings.  We believe that here, as in
Theorem~\ref{mainthm}, the number of \emph{non}-ideal colorings is minor:
\begin{conjecture}\label{34Conj}
For $q\in \{5,6\}$,
\[
\mbox{$C_q(Q_d) \sim (1+{\bf 1}_{\{q ~ {\rm odd}\}})
\C{q}{\lfloor q/2\rfloor}(\lfloor q/2\rfloor \lceil q/2\rceil)^{N/2}
\exp[f(q)]$}
\]
as $d\ra\infty$.
\end{conjecture}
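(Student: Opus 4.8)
The plan is to follow the proof of Theorem~\ref{mainthm} more or less line by line, replacing the numerical constants there by their $q$-dependent analogues. Write $M_q$ for the conjectured value; since $q\le 6$, the discussion of Section~\ref{More} shows the number of \emph{ideal} $q$-colorings is $\sim M_q$, and this is trivially a lower bound for $C_q(Q_d)$. So it suffices to prove the matching upper bound, which, just as in the reduction following Theorem~\ref{start}, will follow from: (i) an analogue of Theorem~\ref{start}---for all but $C_q(Q_d)2^{-\gO(d)}$ of the $q$-colorings there is a \emph{main phase}, i.e.\ an ordered near-equipartition $(C,D)$ of $[q]$ with $\{|C|,|D|\}=\{\lfloor q/2\rfloor,\lceil q/2\rceil\}$ and even vertices colored from $C$, at which $f$ has fewer than $\alpha^d$ bad vertices; and (ii) a bound of $o(M_q)$ on the number of \emph{non}-ideal colorings having a fixed main phase. (Given (i) and (ii), subtract the $C_q(Q_d)2^{-\gO(d)}$ term and rearrange, exactly as for $q=4$.) Part (i) should be available from---or a routine extension of---Theorem~1.1 of \cite{EG}; one needs to check its error term has the stated form, and ideally that the constant $\alpha$ is not too large (see below). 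The substance is (ii); since there are only $O(1)$ main phases, it is enough to bound, for one fixed $(C,D)$, the number of non-ideal colorings with that main phase by $o((\lfloor q/2\rfloor\lceil q/2\rceil)^{N/2}\exp[f(q)])$.

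For (ii) one reruns Section~\ref{Proof} with $(C,D)$ fixed. The isoperimetric estimate (Lemma~\ref{1overd}) and the Sapozhenko--Galvin container bounds (Lemmas~\ref{ag}, \ref{approx}, \ref{agbh}) concern only subsets of $Q_d$ and apply verbatim, as does the template construction of Section~\ref{Templates}. What changes is the entropy bookkeeping of Section~\ref{Colors}: the crude bound $T(u)\le 2+O(1/d)$ becomes $T(u)\le\log(\lfloor q/2\rfloor\lceil q/2\rceil)+O(1/d)$ (since $\log|c|+\log(q-|c|)$ is maximized near $|c|=q/2$), giving the correct baseline $(\lfloor q/2\rfloor\lceil q/2\rceil)^{N/2}$; and Proposition~\ref{prop:tu} becomes: if $N_u$ splits into a nonempty good part and a nonempty bad part then $T_\mu(u)\le\log(\lfloor q/2\rfloor\lceil q/2\rceil)-\log\tfrac32+O(1/d)$ for $q\in\{5,6\}$ (and $-\log 2$ for $q=4$), which one checks by the same case analysis. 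The constants ``$\log 3$'' in \eqref{C2}/\eqref{Plbd} and ``$1$'' (the generic good-vertex bound) become $\log(q-1)$ and $\log|D|$ (or $\log|C|$) respectively. Tracing these through, the scheme still produces net savings provided (a) the refined $T$-bound stays strictly below the crude one---a positive margin $\log\tfrac32$ for $q\in\{5,6\}$---and (b) in the treatment of the small ``cheap'' pieces $\bar P^s$, $P^s$ what is recovered still dominates what is charged; the relevant inequalities reduce to conditions like $2\log\lceil q/2\rceil\ge\lceil q/2\rceil$, which hold exactly in the range $\lceil q/2\rceil\le 3$, i.e.\ $q\le 6$. (This is the same threshold as the condition $f(q)\ll\sqrt N$ that underlies the conjecture.) The $A^l$, $\bar Q^l$, $P^l$ accounting and the exceptional case \eqref{except} go through with only cosmetic changes.

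The main obstacle is that for $q\ge 6$---and, in one of its two orientations, already for $q=5$---a flaw no longer forces its (good) neighborhood to be \emph{monochromatic}: it only confines those neighbors to the $\lceil q/2\rceil-1$ (or $\lfloor q/2\rfloor-1$) colors of the side not containing the flaw's color. For $q=4$ this monochromaticity is ``free'' rigidity that Section~\ref{Colors} uses repeatedly---in the assertions that the various $\bar P_i,\hat P_i,P^s_i,F_i$ are monochromatic and in the ``one bit per component'' color cost \eqref{ppp}---and, more subtly, in the fact that the bound $H(f_v)\le 1$ for a good even $v$ is already as strong as the constraint ``$f_v$ avoids the color of a bad odd neighbor.'' When $\lceil q/2\rceil=3$ these coincidences fail: a good vertex near a flaw really does have fewer colors available than a generic good vertex, but the template as described does not record which vertices of $G\cup\hat G$ are bad, so the argument cannot exploit it, and in configurations with many ``pendant'' flaws (a bad even vertex with an adjacent bad odd vertex, and similar) the bound it yields is too weak. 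The remedy is to enrich the template so as to capture this: e.g.\ additionally specify $X_f\cap(G\cup\hat G)$ and the color-structure of $\bar P\cup N(\bar P)$ (a proper list-$(\lceil q/2\rceil-1)$-coloring of a connected bipartite graph) rather than just a monochromatic color. One must then show this extra bookkeeping is paid for by the corresponding entropy savings---$\log\tfrac32$ per affected good vertex; since locating a bad vertex costs $\sim d$ bits while the saving it induces along its neighborhood is $\sim d\log\tfrac32<d$, this balance is \emph{not} automatic, and appears to hinge either on the isoperimetric expansion of the bad sets or on having $\alpha<\tfrac43$ in the structural input---sharpening which is exactly the kind of thing the $q=4$ proof did not need to worry about. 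Making this trade work is the crux, and is why Conjecture~\ref{34Conj}, though we believe it, is stated only as a conjecture.

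Finally, for $q=5$ the near-equipartition is asymmetric ($|C|\ne|D|$), so the symmetrizing assumption \eqref{f*def}---which for $q=4$ let us fix a single ``heavy side''---is no longer available in the usual form. It is cleanest to split the main phases into the $\binom{5}{2}$ with a $2$-set on the even side and the $\binom{5}{2}$ with a $2$-set on the odd side, and to run the template/entropy argument with $\mathcal E$ playing the special role for the first family and $\mathcal O$ for the second; the two resulting counts are equal by the $\mathcal E\leftrightarrow\mathcal O$ symmetry of the cube, and in each the side whose neighborhoods appear as $\bar Q,\hat Q,\dots$ has size $2$, so the monochromaticity difficulty above intervenes only through the even (resp.\ odd) structure $A,\hat A$, where it is handled by Sapozhenko's machinery as for $q=6$.
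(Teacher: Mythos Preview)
The statement you are addressing is a \emph{conjecture} in the paper, not a theorem: the paper offers no proof of it, only the heuristic discussion in Section~\ref{More} explaining why the asymptotic value is a natural guess. So there is no ``paper's own proof'' to compare against.

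Your proposal is not really a proof either, and you say as much: it is an outline of how one would \emph{try} to adapt the $q=4$ argument, together with an honest account of where it breaks. That account is largely accurate. The crucial difficulty you isolate---that for $\lceil q/2\rceil\ge 3$ a bad vertex no longer forces its good neighbors to be monochromatic, so the ``one bit per component'' bookkeeping of \eqref{ppp} and the rigidity used throughout Section~\ref{Colors} collapse---is exactly the obstruction. Your observation that the na\"{\i}ve fix (recording $X_f\cap(G\cup\hat G)$ in the template) costs roughly $d$ bits per bad vertex but recovers only about $d\log\tfrac32$ bits, so does not balance on its own, is also correct and is the heart of why this remains open. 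A couple of your side claims are optimistic: the analogue of Theorem~\ref{start} for general $q$ in \cite{EG} does not obviously give $\alpha<4/3$, and the inequality ``$2\log\lceil q/2\rceil\ge\lceil q/2\rceil$'' you invoke is not the actual constraint governing the $\bar P^s,P^s$ balance (the relevant comparison is between the isoperimetric gain and the per-vertex entropy loss, not that algebraic inequality). But these are secondary to the main gap you already flag. In short: your diagnosis matches the paper's implicit one---this is genuinely open, and the $q=4$ proof does not extend without a new idea.
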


\nin
(For larger $q$, the expression in \eqref{EGC}---now \emph{with} the $o(1)$, which as usual can be 
negative---is a \emph{lower} bound on the number of ideal colorings, so also on $C_q(Q_d)$.
In these cases we suspect that a negative $o(1)$ is the truth.)

%%%%%%%%%%%%%%%%%%%%%%%%%%%%%%%%%%%%%

\end{document}